\pgfplotsset{compat=1.13}
\pgfplotsset{plotOptions/.style={%
		width=\linewidth,
		%		y post scale=0.8,
		%		ymin=0,ymax=100,
		%		ymin=0,
		%		xmin=0,xmax=100,
		xlabel={Iteration $k$},
		ylabel={$\frac{f(w_k)-f_\star}{\|w_0-w_\star\|^2}$},
		label style={font=\small},
		legend style={font=\small},
		%		legend pos=north west,
		%		legend cell align=left,
		%xtick={1,10,100},
		tick label style={font=\small},
		%		no markers,
		solid,
		very thick
	}}
\pgfplotsset{plotOptions2/.style={%
		width=\linewidth,
		%		y post scale=0.8,
		%		ymin=0,ymax=100,
		%		ymin=0,
		%		xmin=0,xmax=100,
		xlabel={Iteration $k$},
		ylabel={$\frac{f(w_k)-f_\star}{f(w_0)-f_\star}$},
		label style={font=\small},
		legend style={font=\small},
		%		legend pos=north west,
		%		legend cell align=left,
		%xtick={1,10,100},
		tick label style={font=\small},
		%		no markers,
		solid,
		very thick
	}}	
\newcommand{\cond}{q}
\newcommand{\varS}{{\bar S}}
\newcommand{\bxx}{\mathbf{w}}
\newcommand{\bgg}{\mathbf{g}}
\newcommand{\bff}{\mathbf{f}}
\newcommand{\sz}{\lambda}
\newcommand{\fullMethodName}{{Information-Theoretic Exact Method}}
\newcommand{\shortMethodName}{{ITEM}}
\journalname{Journal Name}
\begin{document}

\title{An optimal gradient method for smooth strongly convex minimization\thanks{A. Taylor acknowledges support from the European Research Council (grant SEQUOIA 724063).This work was funded in part by the french government under management of Agence Nationale de la recherche as part of the ``Investissements d’avenir'' program, reference ANR-19-P3IA-0001 (PRAIRIE 3IA Institute).}}

\titlerunning{Optimal gradient methods}    

\author{Adrien Taylor \and Yoel Drori}

\authorrunning{A.~Taylor, Y.~Drori}

\institute{Adrien  Taylor  \at
              INRIA, D\'epartement d'informatique de l'ENS, \'Ecole normale sup\'erieure, CNRS, PSL Research University, Paris, France
              Email: adrien.taylor@inria.fr\\
              Yoel Drori \at Google Research Israel. Email: dyoel@google.com}

\date{Date of current version: \today}

\maketitle

\begin{abstract}We present an optimal gradient method for smooth strongly convex optimization. The method is optimal in the sense that its worst-case bound on the distance to an optimal point \emph{exactly} matches the lower bound on the oracle complexity for the class of problems, meaning that no black-box first-order method can have a better worst-case guarantee without further assumptions on the class of problems at hand.
In addition, we provide a constructive recipe for obtaining the algorithmic parameters of the method and illustrate that it can be used for deriving methods for other optimality criteria as well.
\end{abstract}
\section{Introduction}
Consider the unconstrained minimization problem
\begin{equation}\label{eq:OPT}\min_{x\in\mathbb{R}^d} f(x),\end{equation}
where $f$ is a smooth strongly convex function. For solving such problems, one can rely on black-box first-order methods, which iteratively acquire information about $f$ by evaluating its gradient at a sequence of iterates. In this context, the question of designing first-order methods with good worst-case guarantees occupies an important place. 

In this work, we provide a black-box first-order method, the \fullMethodName{ }(\shortMethodName),
designed for minimizing smooth strongly convex functions. This method attains the lower bound on the oracle complexity, sometimes referred to as information-theoretic complexity~\citep{nemirovsky1992information}, of smooth strongly convex minimization when optimality is measured by the distance of the method's output to an optimal solution. 
% As a result, no black-box first-order method can be guaranteed to perform strictly better in the worst-case for this criterion, neither in rate, nor in the constants involved.

Given an $L$-smooth $\mu$-strongly convex function $f$ with $0<\mu<L$, the method can be concisely written as
\begin{equation}\label{eq:Method}
\begin{aligned}
{y_{k}}&=(1-\beta_k) z_k+\beta_k {\left(y_{k-1}-\frac1L \nabla f(y_{k-1})\right)} \\
z_{k+1}&={(1-\cond \delta_k)z_k+\cond \delta_k \left(y_{k}-\frac1\mu  \nabla f(y_{k})\right)},
\end{aligned}
\end{equation}
where $\cond=\mu/L$ denotes the inverse condition number (note that as $\mu\rightarrow 0$ the alternate formulation below should be preferred for obvious numerical reasons). Both sequences $\{\beta_k\}$ and $\{\delta_k\}$ are parametrized by a sequence $A_k$, incorporating the dependency on the current iteration number, as follows
\[ \beta_k=\frac{A_k}{(1-\cond)A_{k+1}}\text{, and }  \delta_k=\frac1{2}\frac{(1-\cond )^2 A_{k+1}-(1+\cond ) A_k}{1+\cond+\cond  A_k},\]
with $A_0=0$ and
\[ A_{k+1}= \frac{(1+\cond) A_k+2 \left(1+\sqrt{(1+A_k) (1+\cond  A_k)}\right)}{(1-\cond )^2}, \quad k\geq 0.\]
As shown in the following, this sequence allows to describe the worst-case performance of~\eqref{eq:Method} as 
\[ \|z_N-x_\star\|^2\leq \frac{1}{1+\cond A_N}\|z_0-x_\star\|^2,\]
which is the exact lower bound for smooth strongly convex minimization, as obtained in~\citep{drori2021exact}. Therefore, no black-box first-order method can further improve this guarantee, and \shortMethodName{} achieves the lower bound on the oracle (or information-theoretic) complexity for smooth strongly convex minimization. In addition, as $A_N\geq (1-\sqrt{q})^{-2N}$, this bound provides a guarantee that $z_k$ strictly improves over $z_0$ with a worst-case convergence rate $(1-\sqrt{q})^{2}$.

\shortMethodName{} is also closely related to other methods. In particular, when $\mu>0$ and as $k\rightarrow\infty$, the method's parameters $\beta_k$ and $\delta_k$ tends to those of the Triple Momentum Method (TMM) by~\citet{van2017fastest}, and in the case $\mu=0$ the parameters correspond to those of the Optimized Gradient Method (OGM) of~\citet{kim2015optimized}, which exactly achieves a lower complexity bound for minimizing function values as established in~\citet{drori2017exact}. Details on those relationships are provided in Section~\ref{s:limits}.

\subsection{Related works}
\paragraph{Lower bounds and accelerated methods.} The method presented in this work is closely related to the celebrated fast gradient methods (FGMs) by~\citet{nest83,nest-book-04}. Lyapunov and potential function-based analyses of FGMs were presented in many works, including in the original~\citep{nest83}. The analyses are usually tailored for the smooth convex minimization setting~\citep{nest83,beck2009fast}, for the smooth strongly convex one~\citep{wilson2016lyapunov,bansal2019potential}, and sometimes deal with both simultaneously~\citep{nest-book-04,gasnikov2018universal}. In the large-scale quadratic smooth (possibly strongly) convex minimization setting, optimal worst-case accuracies are achieved by Chebyshev and conjugate gradient methods~\citep{nemirovsky1992information,nemirovski1999optimization}. Fast first-order methods for large-scale convex minimization are surveyed in the recent monograph~\citep{dAspremontAccelerated}.

%The asymptotic relationship between the Doubly Optimal Method and the Triple Momentum Method is reminiscent of similar relationships between other methods, when $\mu>0$. In particular, Chebyshev's method for minimizing quadratic functions asymptotically corresponds to Polyak's heavy-ball, and Nesterov's accelerated gradient  corresponds to Nesterov's method with constant momentum, asymptotically; such relationships can be found in~\citep{nemirovskiy1984iterative,dAspremontAccelerated}.

\paragraph{Performance estimation problems.} The idea of computing worst-case accuracy of a given method through semidefinite programming dates back to~\citet{drori2013performance}. It was {refined} using the concept of convex interpolation in~\citep{taylor2017smooth}, which allows guaranteeing that worst-case accuracies provided by the semidefinite programs are tight (i.e., the worst-case guarantee corresponds to a matching example in the problem class). The approach was taken further in different directions for analyzing and designing numerical methods in different contexts. A very related line of works, initiated by~\citep{lessard2016analysis}, presents such analyses from a control theoretic perspective, and corresponds to the problem of looking for Lyapunov functions. Those works hence rather target \emph{asymptotic} properties of time-invariant numerical methods, which allows using smaller sized SDPs. 

\paragraph{Optimized gradient methods.} The method presented {in this work} was first obtained as a solution to a convex optimization problem, through an approach closely related to that taken by~\citet{drori2013performance} and~\citet{kim2015optimized} for obtaining the Optimized Gradient Method (OGM). The OGM for smooth convex minimization ($\mu=0$), obtained by~\citet{kim2015optimized}, was obtained by explicitly choosing the step sizes of a method for minimizing an upper bound on the worst-case inaccuracy criterion.
The resulting method was later proved to achieve the lower bound in~\citep{drori2017exact}. When $\mu=0$, optimal methods for optimizing function value accuracy $f(x_N)-f_\star$ include the OGM~\citep{kim2015optimized,drori2017exact}, and the conjugate gradient method~\citep{drori2020efficient}. It is also worth mentioning that optimized methods can be developed for other criteria as well. In particular, optimized methods for gradient norms $\|\nabla f(x_N)\|^2$ are studied by~\citet{kim2018optimizing}, in the smooth convex setting. 

The \emph{Triple Momentum Method} (TMM)~\citep{van2017fastest} was designed as an optimized gradient method through Lyapunov arguments, using an idea similar to that of the OGM, but for time-independent methods (i.e., whose coefficients do not depend on the iteration counter), for when $\mu>0$. The method was originally obtained using the integral quadratic framework by~\citet{lessard2016analysis}; see~\citep{van2017fastest} and~\citep{lessard2020direct}. The problem of devising optimized methods for smooth strongly convex minimization (with $\mu>0$) is also addressed in~\citep{zhou2020boosting,gramlich2020convex}, which also recovers the TMM as a particular case in their analyses.

So far, it remained unclear how to conciliate both \emph{optimal} methods, as the OGM is clearly not optimal anymore when $\mu>0$ (its worst-case guarantees remain unchanged in the presence of strong convexity~\citep{kim2017convergence}), and as the TTM is not defined when $\mu=0$. 
 
\subsection{Organization}
A worst-case analysis of the \fullMethodName{ }is provided in Section~\ref{s:OGM_sc}. 
In Section~\ref{s:optimized_steps}, we describe a constructive approach that leads to the method and illustrate that it can be used for developing optimized methods for other performance criteria. 
We draw some conclusions in Section~\ref{s:ccl}.

\subsection{Preliminaries and notations}  
We use the standard notation $\langle\,\cdot\,;\,\cdot\,\rangle:\mathbb{R}^d\times\mathbb{R}^d\rightarrow \mathbb{R}$ to denote the Euclidean inner product, and the corresponding induced Euclidean norm $\|\cdot\|$. Furthermore, we denote by $x_\star$ some optimal solution to~\eqref{eq:OPT} ({which is} unique if $\mu>0$), and by $f_\star$ its optimal value. The class of $L$-smooth $\mu$-strongly convex functions is standard and can be defined as follows.
\begin{definition}Let $f:\mathbb{R}^d\rightarrow\mathbb{R}$ be a proper, closed, and convex function, and consider two constants $0\leq\mu<L<\infty$. We say that $f$ is $L$-smooth and $\mu$-strongly convex, denoted $f\in\mathcal{F}_{\mu,L}(\mathbb{R}^d)$, if
\begin{itemize}
    \item ($L$-smooth) for all $x,y\in\mathbb{R}^d$, it holds that $f(x)\leq f(y)+\langle \nabla f(y);x-y\rangle+\frac{L}{2}\|x-y\|^2$,
    \item ($\mu$-strongly convex)  for all $x,y\in\mathbb{R}^d$, it holds that $f(x)\geq f(y)+\langle \nabla f(y);x-y\rangle+\frac{\mu}{2}\|x-y\|^2$.
\end{itemize}
We simply denote $f\in\mathcal{F}_{\mu,L}$ when the dimension is either clear from the context or unspecified. In addition, we use $\cond:=\mu/L$ the (inverse) condition number of the class (hence $0\leq \cond< 1$), and do not explicitly treat the trivial cases $L=\mu$ for readability purposes.

\end{definition}
Smooth strongly convex functions satisfy many inequalities, see e.g.,~\citep[Theorem 2.1.5]{nest-book-04}. For the developments below, we need only one specific inequality characterizing functions in $\mathcal{F}_{\mu,L}$.
\begin{theorem}\label{thm:interp} Let $f\in\mathcal{F}_{\mu,L}(\mathbb{R}^d)$. For all $x,y\in\mathbb{R}^d$, it holds that
\begin{equation*}
\begin{aligned}
f(y)\geq f(x)+&\langle \nabla f(x);y-x\rangle+\frac{1}{2L}\|\nabla f(x)-\nabla f(y)\|^2+\frac{\mu}{2(1-\mu/L)}\|x-y-\tfrac{1}{L}(\nabla f(x)-\nabla f(y))\|^2.
\end{aligned}
\end{equation*}
\end{theorem}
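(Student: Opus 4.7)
The plan is to reduce this to the classical cocoercivity-type bound for smooth convex functions, which is the $\mu=0$ special case of the inequality itself: for any $(L-\mu)$-smooth convex function $g$,
\[ g(y) \geq g(x) + \langle \nabla g(x); y-x\rangle + \frac{1}{2(L-\mu)}\|\nabla g(x) - \nabla g(y)\|^2. \]
This is standard (e.g., Nesterov's textbook, Theorem 2.1.5, $\mu=0$), so I would assume it as a known building block.

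The key step is the standard Moreau-style reduction: define $g(x) := f(x) - \tfrac{\mu}{2}\|x\|^2$. Because $f\in\mathcal{F}_{\mu,L}$, strong convexity of $f$ implies convexity of $g$, while $L$-smoothness of $f$ implies $(L-\mu)$-smoothness of $g$; in other words $g \in \mathcal{F}_{0,L-\mu}(\mathbb{R}^d)$. Note that $\nabla g(x) = \nabla f(x) - \mu x$ and so
\[ \nabla g(x) - \nabla g(y) = \nabla f(x) - \nabla f(y) - \mu(x-y). \]
Applying the cocoercivity bound above to $g$ and then substituting the definitions of $g$ and $\nabla g$ back in, I would collect the quadratic-in-$x,y$ terms using the identity
\[ -\mu\langle x; y-x\rangle + \tfrac{\mu}{2}(\|y\|^2 - \|x\|^2) = \tfrac{\mu}{2}\|x-y\|^2, \]
which yields the intermediate form
\[ f(y) \geq f(x) + \langle \nabla f(x); y-x\rangle + \tfrac{\mu}{2}\|x-y\|^2 + \tfrac{1}{2(L-\mu)}\|\nabla f(x) - \nabla f(y) - \mu(x-y)\|^2. \]

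The remaining work is purely algebraic: I need to show this intermediate form equals the one in the statement. The cleanest way is to expand both sides as quadratic forms in the three ``vectors'' $\|x-y\|^2$, $\|\nabla f(x)-\nabla f(y)\|^2$, and $\langle x-y;\nabla f(x)-\nabla f(y)\rangle$, and check the coefficients match. On the target side, expanding $\|x-y - \tfrac{1}{L}(\nabla f(x)-\nabla f(y))\|^2$ and combining $\tfrac{1}{2L} + \tfrac{\mu}{2L(L-\mu)} = \tfrac{1}{2(L-\mu)}$ collects the gradient-squared coefficient, while $\tfrac{\mu(L-\mu)+\mu^2}{2(L-\mu)} = \tfrac{\mu L}{2(L-\mu)}$ collects the $\|x-y\|^2$ coefficient; the cross term coefficients coincide immediately.

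I do not foresee a real obstacle here: the reduction to the convex cocoercivity inequality is the only conceptual step, and once it is in place the identification with the stated form is a routine matching of coefficients. The one thing to be careful about is the sign and factor of $1/L$ in the translated-vector norm, since the paper's inequality uses the translation $x-y - \tfrac{1}{L}(\nabla f(x) - \nabla f(y))$ (with $1/L$, not $1/(L-\mu)$), which is why a small ``book-keeping'' identity is needed to recognize the intermediate form as the stated one.
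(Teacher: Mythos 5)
Your proof is correct: the reduction to the $\mu=0$ cocoercivity bound via $g=f-\tfrac{\mu}{2}\|\cdot\|^2\in\mathcal{F}_{0,L-\mu}$ and the subsequent coefficient matching (using $\tfrac{1}{2L}+\tfrac{\mu}{2L(L-\mu)}=\tfrac{1}{2(L-\mu)}$ and $\tfrac{\mu}{2(1-\mu/L)}=\tfrac{\mu L}{2(L-\mu)}$) both check out. The paper itself states this inequality without proof, citing the literature, and your argument is precisely the standard one behind those references; it also mirrors the same $f\mapsto \tilde f=f-\tfrac{\mu}{2}\|\cdot-w_\star\|^2$ transformation the paper uses later in Section~\ref{s:optimized_steps}, so there is nothing further to add.
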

This inequality turns out to be key in proving worst-case guarantees for first-order methods applied on smooth strongly convex problems, due to the following result~\citep[Theorem 4]{taylor2017smooth}.
\begin{theorem}[$\mathcal{F}_{\mu,L}$-interpolation]\label{thm:interp2} Let $I$ be an index set and $S=\{(x_i,g_i,f_i)\}_{i\in I}\subseteq \mathbb{R}^d\times\mathbb{R}^d\times \mathbb{R}$ be a set of triplets. There exists $f\in\mathcal{F}_{\mu,L}$ satisfying $f(x_i)=f_i$ and $g_i\in\partial f(x_i)$ for all $i\in I$ if and only if
\begin{equation*}
\begin{aligned}
f_i\geq f_j+&\langle g_j;x_i-x_j\rangle+\frac{1}{2L}\|g_i-g_j\|^2+\frac{\mu}{2(1-\mu/L)}\|x_i-x_j-\tfrac{1}{L}(g_i-g_j)\|^2
\end{aligned}
\end{equation*}
holds for all $i,j\in I$.
\end{theorem}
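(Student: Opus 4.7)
The \emph{only if} direction is immediate: if $f\in\mathcal{F}_{\mu,L}$ interpolates the data, i.e.\ $f(x_i)=f_i$ and $g_i=\nabla f(x_i)$ for every $i\in I$, then applying Theorem~\ref{thm:interp} to each ordered pair $(x_j,x_i)$ yields exactly the stated inequality. The substance is in the converse, where one must build an interpolating function from the family of inequalities.

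My plan is first to reduce to the smooth convex case $\mu=0$. Setting $\tilde g_i:=g_i-\mu x_i$ and $\tilde f_i:=f_i-\tfrac{\mu}{2}\|x_i\|^2$, a direct algebraic expansion shows that the stated $\mathcal{F}_{\mu,L}$-inequality on $\{(x_i,g_i,f_i)\}$ is equivalent to the plain smooth-convex $\mathcal{F}_{0,L-\mu}$-inequality on $\{(x_i,\tilde g_i,\tilde f_i)\}$: after expanding the squared norms, the coefficients of $\|x_i-x_j\|^2$, $\|g_i-g_j\|^2$ and $\langle g_i-g_j;\,x_i-x_j\rangle$ match on both sides, the extra $\tfrac{\mu}{2(1-\mu/L)}\|x_i-x_j-\tfrac1L(g_i-g_j)\|^2$ term in the former being precisely what the gradient shift $g_i\mapsto\tilde g_i$ produces. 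Correspondingly, $f\in\mathcal{F}_{\mu,L}$ interpolates the original data iff $\tilde f(x):=f(x)-\tfrac\mu2\|x\|^2\in\mathcal{F}_{0,L-\mu}$ interpolates the tilded data, and the problem reduces to proving $\mathcal{F}_{0,L'}$-interpolation.

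For this smooth convex case, I would use Fenchel duality, exploiting that $f\in\mathcal{F}_{0,L'}$ if and only if its conjugate $f^*$ is proper, closed, convex, and $(1/L')$-strongly convex. Setting $\phi_i:=\langle g_i;x_i\rangle-f_i$, the task becomes to produce a $(1/L')$-strongly convex $h$ with $h(g_i)=\phi_i$ and $x_i\in\partial h(g_i)$; taking $f:=h^*$ then recovers $f(x_i)=f_i$ and $\nabla f(x_i)=g_i$ by Fenchel--Young. A natural candidate is the strongly convex envelope
\[ h(y)=\tfrac{1}{2L'}\|y\|^2+\max_{i\in I}\left\{\phi_i-\tfrac{1}{2L'}\|g_i\|^2+\left\langle x_i-\tfrac{1}{L'}g_i;\,y-g_i\right\rangle\right\}, \]
i.e.\ a strongly convex quadratic plus the supremum of affine pieces read from the data.

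The main obstacle is verifying that this candidate actually interpolates the dual data: one must show that the reduced ($\mu=0$) form of the displayed inequality is precisely the pointwise condition ensuring that the $i$-th affine piece attains the maximum at $y=g_i$ (so that $h(g_i)=\phi_i$) and that the corresponding subgradient selection recovers $x_i\in\partial h(g_i)$. Once this is established, the construction is complete, and the remaining reduction and \emph{only if} arguments are mechanical.
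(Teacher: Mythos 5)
The paper does not actually prove this statement itself---it is imported from \citep[Theorem 4]{taylor2017smooth}---and your outline follows essentially the same route as that reference: the only-if direction via Theorem~\ref{thm:interp}, the reduction to $\mu=0$ by subtracting the strong-convexity quadratic (your algebraic equivalence of the two inequalities is correct), and conjugation to a $(1/L')$-strongly convex interpolation problem solved by your envelope, which after expansion is exactly the standard construction $h(y)=\max_i\{\phi_i+\langle x_i;y-g_i\rangle+\tfrac{1}{2L'}\|y-g_i\|^2\}$. The step you flag as the main obstacle does go through: requiring the $j$-th piece not to exceed $\phi_i$ at $y=g_i$ is, after substituting $\phi_i=\langle g_i;x_i\rangle-f_i$, precisely the $\mathcal{F}_{0,L'}$ inequality for the ordered pair $(j,i)$, while activity of the $i$-th piece at $y=g_i$ gives $h(g_i)=\phi_i$ and $x_i\in\partial h(g_i)$, so your sketch is sound (modulo the routine checks that $h$ is proper, closed, and $1/L'$-strongly convex, hence $h^*$ is finite-valued and $L'$-smooth).
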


\section{An optimal gradient method}\label{s:OGM_sc}
For our purposes, probably the most convenient formulation of~\shortMethodName{}, allowing a unified treatment for the case $\mu=0$, is as presented in Algorithm~\ref{alg:item}.
\begin{algorithm}
% \begin{oframed}
% 	\textbf{\fullMethodName{ }(\shortMethodName)}
	\begin{itemize}
		\item[] {\bf{}Input:} $f\in\mathcal{F}_{\mu,L}$ with $0\leq \mu<L<\infty$, initial guess $x_0\in\mathbb{R}^d$
		\item[] {\bf{}Initialization:} $y_{-1}=z_0=x_0$, $A_0=0$, $\cond=\mu/L$
		\item[] {\bf{}For} $k=0,1,\hdots$
\begin{equation}\label{eq:OGM_sc}
\begin{aligned}
\text{Set } A_{k+1}&= \frac{(1+\cond) A_k+2 \left(1+\sqrt{(1+A_k) (1+\cond  A_k)}\right)}{(1-\cond )^2}\\
\beta_k&=\frac{A_k}{(1-\cond)A_{k+1}}\text{, and }  \delta_k=\frac1{2}\frac{(1-\cond )^2 A_{k+1}-(1+\cond ) A_k}{1+\cond+\cond  A_k}\\[8pt]
{y_{k}}&=(1-\beta_k) z_k+\beta_k x_{k} \\
x_{k+1}&={y_{k}-\frac1L \nabla f(y_{k})}\\
z_{k+1}&={(1-\cond \delta_k)z_k+\cond \delta_k y_{k}-\frac{\delta_k}L  \nabla f(y_{k})}.
\end{aligned}
\end{equation}
\end{itemize}  
% \end{oframed}
\caption{\fullMethodName{ }(\shortMethodName)}\label{alg:item}
\end{algorithm}

The following theorem states the main results concerning Algorithm~\ref{alg:item}: firstly, a bound on $\|z_N-x_\star\|^2$, and secondly, a bound involving function values, which is more relevant as $\mu\rightarrow0$. A proof for this theorem is provided in the next section.

\begin{theorem}\label{corr:finalbound}
Let $f\in\mathcal{F}_{\mu,L}$ and denote $\cond=\mu/L$. For any $x_0=z_0\in\mathbb{R}^d$ and $N\in\mathbb{N}$ with $N\geq 1$, the iterates of~\eqref{eq:OGM_sc} satisfy
\begin{equation*}
\begin{aligned}
\|z_N-x_\star\|^2 &\leq \frac{1}{1+\cond A_N}\|z_0-x_\star\|^2\leq \tfrac{(1-\sqrt{\cond})^{2N}}{(1-\sqrt{\cond})^{2N}+\cond}\|z_0-x_\star\|^2,\\
\psi_N &\leq \frac{L}{(1-\cond)A_{N+1}}\|z_0-x_\star\|^2\leq \min\left\{ (1-\sqrt{\cond})^{2(N+1)},\frac{1}{(N+1)^2}\right\}\frac{L}{(1-\cond)}\|z_0-x_\star\|^2,
\end{aligned}
\end{equation*}
with $\psi_N={f(y_{N})-f_\star-\tfrac{1}{2L}\|\nabla f(y_{N})\|^2-\tfrac{\mu}{2(1-\mu/L)}\|y_{N}-\tfrac1L \nabla f(y_{N})-x_\star\|^2}\geq 0$.
\end{theorem}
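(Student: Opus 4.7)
The plan is to prove Theorem~\ref{corr:finalbound} by a potential-function (Lyapunov) argument. Specifically, I would introduce
\[ \Phi_k := \frac{(1-\cond)A_k}{2L}\,\psi_{k-1} + \frac{1+\cond A_k}{2}\|z_k - x_\star\|^2, \]
with the convention that $A_0\psi_{-1} = 0$, so that $\Phi_0 = \tfrac{1}{2}\|z_0-x_\star\|^2$. The key step is to establish the per-iteration monotonicity $\Phi_{k+1} \leq \Phi_k$; telescoping then yields $\Phi_k \leq \Phi_0$ for every $k \geq 0$. The nonnegativity $\psi_k \geq 0$ follows by applying Theorem~\ref{thm:interp} between $y_k$ and $x_\star$ (using $\nabla f(x_\star)=0$). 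Discarding the nonnegative $\psi_{N-1}$ term in $\Phi_N \leq \Phi_0$ then yields the distance bound on $\|z_N-x_\star\|^2$, while discarding the nonnegative distance term in $\Phi_{N+1} \leq \Phi_0$ yields the function-value bound on $\psi_N$.

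The heart of the proof is the monotonicity $\Phi_{k+1} \leq \Phi_k$. I would derive it as a nonnegative combination of two instances of Theorem~\ref{thm:interp}: one between $(y_k,\nabla f(y_k))$ and $(x_\star,0)$, and one between $(y_k,\nabla f(y_k))$ and $(y_{k-1},\nabla f(y_{k-1}))$, with multipliers proportional to $A_{k+1}-A_k$ and $A_k$ respectively (up to factors of $(1-\cond)/L$). After substituting the updates in~\eqref{eq:OGM_sc} for $y_k$ and $z_{k+1}$ and grouping like terms, the inequality reduces to a quadratic expression in $\nabla f(y_k)$, $\nabla f(y_{k-1})$, and $z_k - x_\star$ that must be shown to be sign-definite, ideally the negative of a perfect square. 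The precise algebraic form of the recursion for $A_{k+1}$ and of the step-sizes $\beta_k$ and $\delta_k$ stated in~\eqref{eq:OGM_sc} should be exactly what is needed for all cross-terms in this quadratic to cancel, leaving a nonpositive residual. This algebraic matching is the main technical obstacle: the individual calculations are routine, but the bookkeeping of all the quadratic terms is delicate, and the quadratic update rule for $A_{k+1}$ is essentially forced by demanding that the residual vanish.

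Once the monotonicity is in hand, the two explicit convergence rates follow from lower bounds on $A_k$. For the geometric $(1-\sqrt{\cond})^{2N}$ rate, I would note that the recursion gives
\[ (1-\cond)^2 A_{k+1} - (1+\cond)A_k - 2 = 2\sqrt{(1+A_k)(1+\cond A_k)} \geq 2\sqrt{\cond}\,A_k, \]
which rearranges to $A_{k+1} \geq (1-\sqrt{\cond})^{-2}A_k$; combined with $A_1 = 4/(1-\cond)^2 \geq (1-\sqrt{\cond})^{-2}$, this gives $A_N \geq (1-\sqrt{\cond})^{-2N}$ for $N \geq 1$. For the $1/(N+1)^2$ rate, the same recursion together with $(1-\cond)^2 \leq 1$ yields $A_{k+1} \geq (\sqrt{A_k}+1)^2$ (using $(1+A_k)(1+\cond A_k)\geq A_k$ and $2\geq 1$), hence by induction $\sqrt{A_{N+1}} \geq N+1$. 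Substituting these lower bounds into $1/(1+\cond A_N)$ and $L/((1-\cond)A_{N+1})$ delivers the displayed expressions and completes the proof.
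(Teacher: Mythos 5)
Your proposal is correct and takes essentially the same route as the paper: your $\Phi_k$ is the paper's potential $\phi_k$ rescaled by $\tfrac{1}{2L}$, the monotonicity $\Phi_{k+1}\le\Phi_k$ comes from the same two interpolation inequalities with the same weights (proportional to $A_{k+1}-A_k$ and $A_k$), the residual vanishes because $A_{k+1}$ is chosen as a root of the same quadratic, and the telescoping plus the lower bounds $A_N\geq(1-\sqrt{\cond})^{-2N}$ and $A_{N+1}\geq (N+1)^2$ match the paper's. The term-by-term algebraic verification you leave implicit is likewise delegated by the paper to symbolic computation, so the level of detail is comparable.
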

The quantity $\psi_N$ defined above is related to a potential (or Lyapunov) function that turns out to be key to the analysis of the method as provided in the next section. Although $\psi_N$ might appear as slightly unnatural, it should be interpreted in light of Theorem~\ref{thm:interp} with $x=x_\star$ and $y=y_N$, which ensures that $\psi_N\geq 0$. In the special case of $\mu=0$, $\psi_N$ corresponds to $f(y_N)-f_\star-\tfrac{1}{2L}\|\nabla f(y_N)\|^2$, thus applying the standard \emph{descent lemma} stating that $f(x_{N+1})\leq f(y_N)-\tfrac1{2L}\|\nabla f(y_N)\|^2$, we end up with a classical guarantee of type $f(x_{N+1})-f_\star\leq \psi_N\leq \tfrac{L}{(N+1)^2}\|z_0-x_\star\|^2$.

Note that as a result of Theorem~\ref{corr:finalbound}, the three sequences generated by Algorithm~\ref{alg:item} are all valid approximations of $x_\star$ in the following senses: (i) $z_N$ converges to the optimal solution in terms of distance to the solution, (ii) $y_N$ has a guarantee of having a small corresponding $\psi_N$, and (iii) $x_N$ is obtained from a gradient step from $y_{N-1}$, corresponds to having a guarantee on $f(x_N)-f_\star$ when $\mu=0$. 

\subsection{Worst-case analysis}\label{sec:analysis}
For performing the analysis, we use a potential function argument (see e.g., the nice review by~\citet{bansal2019potential}) similar to those used for standard accelerated methods~\citep{nest83,beck2009fast}. We show that for all ${y_{k-1}},z_k\in\mathbb{R}^d$ and $A_k\geq 0$, the function
\begin{equation}\label{eq:potential}
\begin{aligned}
\phi_k=&(1-\cond)A_k \psi_{k-1}+(L+\mu A_k)\|z_k-x_\star\|^2\\=&(1-\cond)A_{k}{\left[f(y_{k-1})-f_\star-\tfrac{1}{2L}\|\nabla f(y_{k-1})\|^{{2}}-\tfrac{\mu}{2(1-\mu/L)}\|y_{k-1}-\tfrac1L \nabla f(y_{k-1})-x_\star\|^2\right]}\\&+\left(
L+\mu A_{k}\right)\|z_{k}-x_\star\|^2
\end{aligned}
\end{equation}
satisfies $\phi_{k+1}\leq\phi_k$ when $z_{k+1}$, $y_{k}$ and $A_{k+1}$ are generated according to~\eqref{eq:OGM_sc}. 
\begin{lemma}\label{thm:potential}
Let $f\in\mathcal{F}_{\mu,L}$ and $k\geq 0$. For any $y_{k-1},z_k\in\mathbb{R}^d$ and $A_k\geq 0$, two consecutive iterations of~\eqref{eq:OGM_sc} satisfy
\[ \phi_{k+1}\leq \phi_k\]
with $A_{k+1}$ being defined as in~\eqref{eq:OGM_sc}.
\end{lemma}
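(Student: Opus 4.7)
The plan is to prove $\phi_{k+1}\le\phi_k$ by exhibiting $\phi_k-\phi_{k+1}$ as a nonnegative linear combination of two instances of Theorem~\ref{thm:interp}, plus a residual that turns out to be a perfect square; this is the standard SOS-style certificate underlying performance estimation proofs. I would invoke Theorem~\ref{thm:interp} at the pairs $(x,y)=(y_k,x_\star)$ and $(x,y)=(y_k,y_{k-1})$, which after rearrangement give upper bounds on $f(y_k)-f_\star$ and on $f(y_k)-f(y_{k-1})$, each of the form ``an inner product with $\nabla f(y_k)$ minus nonnegative quadratic residuals.''

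These two inequalities would be weighted by $w_1=(1-q)(A_{k+1}-A_k)\ge 0$ and $w_2=(1-q)A_k\ge 0$. This split is natural because, after separating the functional parts of $\psi_k$ and $\psi_{k-1}$ from the squared-norm parts, one has
\[(1-q)A_{k+1}\psi_k-(1-q)A_k\psi_{k-1}=w_1\bigl(f(y_k)-f_\star\bigr)+w_2\bigl(f(y_k)-f(y_{k-1})\bigr)+R,\]
where $R$ collects the $\tfrac{1}{2L}\|\nabla f(\cdot)\|^2$ and $\tfrac{\mu}{2(1-q)}\|\cdot-\tfrac1L\nabla f(\cdot)-x_\star\|^2$ contributions. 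Expanding $\|z_{k+1}-x_\star\|^2$ through $z_{k+1}=(1-q\delta_k)z_k+q\delta_k y_k-\tfrac{\delta_k}{L}\nabla f(y_k)$ and eliminating $y_k$ via $y_k=(1-\beta_k)z_k+\beta_k\bigl(y_{k-1}-\tfrac1L\nabla f(y_{k-1})\bigr)$, the quantity $\phi_k-\phi_{k+1}-w_1\cdot(\text{ineq.}~1)-w_2\cdot(\text{ineq.}~2)$ collapses to an explicit quadratic form in the four vectors $z_k-x_\star$, $y_{k-1}-x_\star$, $\nabla f(y_{k-1})$, and $\nabla f(y_k)$.

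The main obstacle is the algebraic verification that this residual quadratic form equals $(L+\mu A_{k+1})\|r_k\|^2$ for some affine expression $r_k$ in those four vectors. I would determine $r_k$ by matching coefficients: the $(y_{k-1}-x_\star)$ and $\nabla f(y_{k-1})$ coefficients force the stated formula $\beta_k=A_k/((1-q)A_{k+1})$; the $\nabla f(y_k)$ coefficient forces the stated formula for $\delta_k$; and the requirement that the remaining discriminant vanish (so the form is a pure square rather than a genuinely indefinite quadratic) produces exactly the quadratic equation on $A_{k+1}$ whose positive root is
\[A_{k+1}=\frac{(1+q)A_k+2\bigl(1+\sqrt{(1+A_k)(1+qA_k)}\bigr)}{(1-q)^2}.\]
Once this perfect-square identity is checked, the conclusion is immediate: $\phi_k-\phi_{k+1}$ is a sum of nonnegative terms ($w_1$ and $w_2$ are nonnegative, the interpolation inequalities hold, and $(L+\mu A_{k+1})\|r_k\|^2\ge 0$). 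The bulk of the effort lies in tracking cross terms cleanly through the substitutions above, which is tedious but entirely mechanical.
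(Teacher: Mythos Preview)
Your plan is essentially the paper's proof: the same two instances of Theorem~\ref{thm:interp} (at $(y_k,x_\star)$ and $(y_k,y_{k-1})$) with the same weights $(1-q)(A_{k+1}-A_k)$ and $(1-q)A_k$, followed by substitution of the update rules and algebraic reorganization. One correction to your expectation about the residual: after the weighted sum and substitutions, the leftover is not a single perfect square $(L+\mu A_{k+1})\|r_k\|^2$ but rather $P(A_{k+1},A_k)$ times a rank-two quadratic (a difference of two squared norms, one in $z_k-x_\star$ and one in a specific linear combination of $\nabla f(y_k)$, $y_{k-1}-\tfrac1L\nabla f(y_{k-1})-x_\star$, and $z_k-x_\star$), where $P(x,y)=(y-(1-q)x)^2-4x(1+qy)$; the recursion for $A_{k+1}$ is exactly $P(A_{k+1},A_k)=0$, so the entire residual vanishes and the only slack in $\phi_{k+1}\le\phi_k$ comes from the two interpolation inequalities themselves. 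In particular, $\beta_k$ and $\delta_k$ are not ``forced'' by making the residual a square---they are taken as given by the algorithm, and the identity then holds as stated.
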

\begin{proof}
We perform a weighted sum of two inequalities due to Theorem~\ref{thm:interp}:
\begin{itemize}
    \item smoothness and strong convexity of $f$ between $x_\star$ and {$y_{k}$} with weight $\lambda_1=(1-\cond)(A_{k+1}-A_k)$
    \begin{equation*}
    \begin{aligned}
    f_\star \geq& {f(y_{k})+\langle \nabla f(y_{k});x_\star-y_{k}\rangle+\tfrac1{2L}\|\nabla f(y_{k})\|^{{2}}+\tfrac{\mu}{2(1-\cond)}\|y_{k}-x_\star-\tfrac1L \nabla f(y_{k})\|^2},
    \end{aligned}
    \end{equation*}
    \item smoothness and strong convexity between ${y_{k-1}}$ and ${y_{k}}$ with weight $\lambda_2=(1-\cond)A_k$
    \begin{equation*}
    \begin{aligned}
    {f(y_{k-1})} \geq& {f(y_{k})+\langle \nabla f(y_{k});y_{k-1}-y_{k}\rangle+\tfrac1{2L}\|\nabla f(y_{k})-\nabla f(y_{k-1})\|^2}\\&{+\tfrac{\mu}{2(1-\cond)}\|y_{k}-y_{k-1}-\tfrac1L (\nabla f(y_{k})-\nabla f(y_{k-1}))\|^2}.
    \end{aligned}
    \end{equation*}
\end{itemize}
Summing up and reorganizing those two inequalities (without substituting $A_{k+1}$ by its definition for now), we arrive to the following inequality
\begin{equation*}
\begin{aligned}
0\geq \lambda_1 &\bigg[{f(y_{k})-f_\star+\langle \nabla f(y_{k});x_\star-y_{k}\rangle+\tfrac1{2L}\|\nabla f(y_{k})\|^2+\tfrac{\mu}{2(1-\cond)}\|y_{k}-x_\star-\tfrac1L \nabla f(y_{k})\|^2}\bigg]\\
+\lambda_2 &\bigg[ {f(y_{k}){-f(y_{k-1})}+\langle \nabla f(y_{k});y_{k-1}-y_{k}\rangle+\tfrac1{2L}\|\nabla f(y_{k})-\nabla f(y_{k-1})\|^2}\\
&\quad+ {\tfrac{\mu}{2(1-\cond)}\|y_{k}-y_{k-1}-\tfrac1L (\nabla f(y_{k})-\nabla f(y_{k-1}))\|^2}\bigg].
\end{aligned}
\end{equation*}
Substituting
\begin{equation*}
\begin{aligned}
{y_{k}}&={(1-\beta_k) z_k+\beta_k \left(y_{k-1}-\frac1L \nabla f(y_{k-1})\right)} \\
z_{k+1}&={(1-\cond \delta_k)z_k+\cond \delta_k y_{k}-\frac{\delta_k}L \nabla f(y_{k})},
\end{aligned}
\end{equation*}
(note that this substitution is also valid when $k=0$ as $\beta_k=0$ in this case, and hence $y_0=z_0$) the weighted sum can be reformulated exactly as (this can be verified by expanding both expressions and matching them on a term by term basis\footnote{The puzzled reader can verify this using basic symbolic computations. We provide a notebook for verifying the equivalence of the expressions in Section~\ref{s:ccl}.}):
\begin{equation*}
\begin{aligned}
\phi_{k+1}
\leq& \phi_k-L K_1 P(A_{k+1},A_k) \|z_k-x_\star\|^2\\
&+\tfrac1{4L}K_2 P(A_{k+1},A_k)\| (1-\cond) A_{k+1}{\nabla f(y_{k})}-\mu A_k \left({y_{k-1}}-x_\star-\tfrac1L {\nabla f(y_{k-1})}\right)+K_3\mu (z_k-x_\star)\|^2
\end{aligned}
\end{equation*}
with three constants (well defined given that $0\leq\mu<L<\infty$ and $A_k,A_{k+1}\geq 0$)
\begin{equation*}
\begin{aligned}
K_1&=\frac{ \cond ^2 }{(1+\cond)^2+(1-\cond)^2\cond A_{k+1}}\\
K_2&=\frac{(1+\cond)^2+(1-\cond )^2 \cond  A_{k+1}}{(1-\cond)^2\left(1+\cond+\cond  A_k\right)^2 A_{k+1}^2}\\
K_3&=(1+\cond)\frac{( 1+\cond) A_k- (1-\cond)(2+\cond A_k)A_{k+1}}{(1+\cond)^2+(1-\cond )^2 \cond  A_{k+1}},
\end{aligned}
\end{equation*}
as well as
\[P(x,y)=(y-(1-\cond )x)^2-4 x (1+\cond  y).\] 
For obtaining the desired potential inequality, it remains to remark that $A_{k+1}$ corresponds to the largest solution of $P(x,A_k)=0$. That is, the weighted sum can be reorganized exactly as  $\phi_{k+1}\leq \phi_k$, reaching the desired claim.\qed
\end{proof}
We are now equipped for proving our main result, presented in Theorem~\ref{corr:finalbound}.
\begin{proof}[Theorem~\ref{corr:finalbound}]
From Lemma~\ref{thm:potential}, we get
\[ \phi_N\leq\phi_{N-1}\leq \hdots\leq \phi_0=L\|z_0-x_\star\|^2.\] 
From Theorem~\ref{thm:interp} (evaluated at $x\leftarrow x_\star$, and $y\leftarrow y_{k-1}$), we have that $(L+\mu A_N)\|z_N-x_\star\|^2\leq \phi_N$, reaching
\[ \|z_N-x_\star\|^2 \leq \frac{\phi_0}{(L+\mu A_N)}=\frac{1}{1+\cond A_N}\|z_0-x_\star\|^2.\]
Similarly, we have that $(1-\cond)A_{N+1}\psi_{N}\leq \phi_{N+1}\leq \phi_N$ and hence
\[ \psi_N \leq \frac{\phi_0}{(1-\cond)A_{N+1}}=\frac{{L}}{(1-\cond)A_{N+1}}\|z_0-x_\star\|^2.\]
For reaching the claims, it is therefore sufficient to characterize the growth rate of $\{A_k\}$. Because finding a closed-form expression for $\{A_k\}$ appears to be out of reach, we consider the classical two scenarios for bounding its growth rate. First, when $\mu=0$,
\[A_{k+1}=2+A_k+2\sqrt{1+A_k}\geq 2+A_k+2\sqrt{A_k}\geq (1+\sqrt{A_k})^2,\]
reaching $\sqrt{A_{k+1}}\geq 1+\sqrt{A_k}$ and hence $\sqrt{A_k}\geq k$ and $A_k\geq k^2$. Second, when $\mu>0$, one also has
\[A_{k+1}=\frac{(1+\cond) A_k+2 \left(1+\sqrt{(1+A_k) (1+\cond  A_k)}\right)}{(1-\cond )^2}\geq \frac{(1+\cond)A_k+2 \sqrt{\cond  A_k^2}}{(1-\cond)^2}=\frac{A_k}{(1-\sqrt{\cond})^2}.\] This last bound, together with $A_1=\tfrac{4}{(1-\cond)^2}=\tfrac{4}{(1+\sqrt{\cond})^2(1-\sqrt{\cond})^2}\geq (1-\sqrt{\cond})^{-2}$, allows reaching the target $A_N\geq (1-\sqrt{\cond})^{-2N}$, thereby concluding the proof.\qed
\end{proof}

\subsection{Limit cases}\label{s:limits}
In this section, we inspect two limit cases of~\shortMethodName. First, when $\mu=0$, \shortMethodName{} can be compared to the Optimized Gradient Method of~\citet{kim2015optimized}. In their notations, we denote by $\theta_k^2=\tfrac{A_{k+1}}{4}$, a sequence that can alternatively be defined recursively as $\theta_0=1$ and $\theta_{k+1}=\frac{1+\sqrt{4\theta_k^2+1}}{2}$. In this setting, the parameters correspond to $\beta_k=\frac{A_k}{A_{k+1}}$, and $\delta_k=\frac{A_{k+1}-A_k}{2}$, and we recover, using~\citet{kim2015optimized}'s notations (using the identity $\theta_k^2=\theta_{k-1}^2+\theta_k$)
    \begin{equation*}
    \begin{aligned}
    {y_{k}}&=\tfrac{\theta_{k}-1}{\theta_{k}}x_k+\tfrac{1}{\theta_{k}}z_k\\
    x_{k+1}&={y_{k}-\tfrac1L \nabla f(y_{k})}\\
    z_{k+1}&={z_k-\tfrac{2}{L}\theta_k \nabla f(y_{k})}.
    \end{aligned}
    \end{equation*}
Note though that~\citet{kim2015optimized} uses a ``last iteration adjustment'' by setting $\theta_N=\tfrac{1+\sqrt{8\theta_{N-1}^2+1}}{2}$. This adjustment is not needed for the purpose of obtaining the optimal bound on $\|z_N-x_\star\|$, and a detailed treatment can be found in~\citep[Section 4.3.1]{dAspremontAccelerated}.

Second, when $\mu>0$ and $k\rightarrow\infty$, one can explicitly compute the limits of the algorithmic parameters
\begin{equation*}
\begin{aligned}
&\lim_{k\rightarrow\infty}\frac{A_{k}}{A_{k+1}}=\lim_{A_k\rightarrow\infty}\frac{(1-\cond)^2A_k}{(1+\cond)A_k+2+2\sqrt{1+(1+\cond)A_k+\cond A_k^2}} =\frac{(1-\cond)^2}{(1+\sqrt{\cond})^2}=(1-\sqrt{\cond})^2\\
&\lim_{k\rightarrow\infty}\beta_k=\lim_{k\rightarrow\infty}\frac{A_{k}}{(1-\cond)A_{k+1}}=\frac{1-\sqrt{\cond}}{1+\sqrt{\cond}}\\
&\lim_{k\rightarrow\infty}\delta_k=\lim_{k\rightarrow\infty}\frac1{2}\frac{(1-\cond )^2 A_{k+1}-(1+\cond ) A_k}{1+\cond+\cond  A_k}=\frac12 \frac{(1-\cond)^2-(1+\cond)(1-\sqrt{\cond})^2}{\cond(1-\sqrt{\cond})^2} =\sqrt{\frac1{\cond}},
\end{aligned}
\end{equation*}
reaching
\begin{equation*}
\begin{aligned}
{y_{k}}&=\frac{1-\sqrt{\cond}}{1+\sqrt{\cond}} {\left(y_{k-1}-\tfrac1L \nabla f(y_{k-1})\right)}+ \left(1-\frac{1-\sqrt{\cond}}{1+\sqrt{\cond}}\right) z_k \\
z_{k+1}&=\sqrt{\cond} \big({y_{k}-\tfrac1\mu  \nabla f(y_{k})}\big)+(1-\sqrt{\cond})z_k,
\end{aligned}
\end{equation*}
which is the Triple Momentum Method~\citep{van2017fastest} and its convergence rate $(1-\sqrt{q})^2$. 

For those two limit cases, the analysis from Section~\ref{sec:analysis} can be simplified accordingly. For the OGM, this leads to the same potential as that provided in e.g.,~\citep[Theorem 11]{taylor19bach}), or~\citep[Section 2]{park2021factor}. For the TMM, this allows recovering the known Lyapunov function from e.g.,~\citep[Inequality (10)]{cyrus2018robust}.

\subsection{Lower bound and matching examples}\label{s:LB}

In this section, we show the correspondence with the lower bound from~\citep{drori2021exact} and provide two very simple one-dimensional examples on which the method achieves its worst-case.

First, the lower bound from~\citep[Corollary 4]{drori2021exact} states that for any black-box first-order, there exists $f\in\mathcal{F}_{\mu,L}$ such that
\[\|x_N-x_\star\|^2\geq \frac{\sz_N^2}{\cond}\|x_0-x_\star\|^2,\]
where $x_\star=\mathrm{argmin}_x f(x)$, $x_N$ is the output of the black-box first-order method under consideration, and where the sequence $\{\sz_i\}$ is defined recursively as $\sz_0=\sqrt{\cond}$ and
\[ \sz_{k+1}= \frac{1-\sqrt{\cond - (1 - \cond) \sz_k^2}}{1+ \sz_k^2} \sz_k.\]
Let us show that it matches the upper bound provided by Theorem~\ref{corr:finalbound}. One can verify the identity
\[ \sz_k^2=\frac{\cond}{1+\cond A_k},\]
by observing that it holds for $k=0$ with $A_0=0$ then using an inductive argument. That is, assuming $\sz_k=\sqrt{\frac{{\cond}}{1+\cond A_k}}$, it is relatively simple to establish that
\[\sz_{k+1}=\frac{1-\sqrt{\cond - (1 - \cond) \left(\frac{{\cond}}{1+\cond A_k}\right)}}{1+ \left(\frac{{\cond}}{1+\cond A_k}\right)} \sqrt{\frac{{\cond}}{1+\cond A_k}}=\sqrt{\frac{{\cond}}{1+\cond A_{k+1}}}.\]

Because the lower bound from~\citep{drori2021exact} and the upper bound from Theorem~\ref{corr:finalbound} match, it is clear that the worst-case guarantee of \shortMethodName{} cannot be improved. 

\medskip

The lower bound proof from~\citep{drori2021exact} is constructive in the sense that it exhibits a ``worst function in the world'' on which any first-order method cannot attain a worst-case guarantee better than the one stated above. Clearly, such a function would naturally attain the worst-case behavior of \shortMethodName{}, however, this function is rather complex and it is the purpose of the following paragraphs to show that the worst-case behavior of \shortMethodName{} is also attained on very simple functions. In particular, the worst-case is achieved on the two base quadratic functions
\[ f_L(x)=\frac{L}{2}|x|^2,\quad f_{\mu}{(x)}=\frac{\mu}{2}|x|^2,\]
i.e., the guarantee $\|z_N-x_\star\|^2\leq\frac{\|x_0-x_\star\|^2}{1+\cond A_N}$ holds with equality on both $f_L({\cdot})$ and~$f_\mu({\cdot})$.
{\begin{lemma} Let  $0< \mu< L< \infty$, and $f_L,f_{\mu}\in\mathcal{F}_{\mu,L}(\mathbb{R})$ with $f_{\mu}(x)=\tfrac{\mu}{2}x^2$ and $f_L(x)=\tfrac{L}{2}x^2$. The iterates of \shortMethodName~\eqref{eq:OGM_sc} satisfy
\[z_{k}^2=\frac{z_0^2}{1+\cond A_k}\]
when applied to either $f_{\mu}$ or $f_L$.
\end{lemma}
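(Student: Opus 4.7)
The plan is to avoid iterating the algorithm by hand and instead re-use the potential-function machinery of Lemma~\ref{thm:potential}, observing that both test functions are \emph{extremal} in the sense that they turn every inequality in that proof into an equality. The target identity $z_k^2 = z_0^2/(1+qA_k)$ will then fall out of the resulting identity $\phi_k = \phi_0$.

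To execute this, I would first verify by a direct one-line computation that the interpolation inequality of Theorem~\ref{thm:interp} holds as an equality on $f_L$ and on $f_\mu$ for every pair of points. For $f_L$ this is transparent: the smoothness residual reproduces $\tfrac{L}{2}(y-x)^2$ exactly, and the strong-convexity residual vanishes because $y-\tfrac{1}{L}\nabla f_L(y)\equiv 0$. For $f_\mu$, the smoothness and strong-convexity residuals add up via $\tfrac{\mu q}{2}+\tfrac{\mu(1-q)}{2}=\tfrac{\mu}{2}$ to match the quadratic expansion.

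Once both weighted interpolation inequalities invoked in the proof of Lemma~\ref{thm:potential} are equalities, and since $A_{k+1}$ is chosen precisely so that $P(A_{k+1},A_k)=0$, the algebraic identity in that proof collapses to $\phi_{k+1}=\phi_k$. Telescoping and using $x_\star=0$ gives $\phi_k=\phi_0=L z_0^2$. To then isolate $z_k^2$ from $\phi_k=(1-q)A_k\psi_{k-1}+(L+\mu A_k)z_k^2$, I would check that $\psi_{k-1}=0$ on both quadratics: for $f_L$ all three defining terms vanish individually since $y_{k-1}-\tfrac{1}{L}\nabla f_L(y_{k-1})=0$ and $f_L(y_{k-1})=\tfrac{1}{2L}\nabla f_L(y_{k-1})^2$, while for $f_\mu$ a one-line cancellation gives $\psi_{k-1}=\tfrac{\mu}{2}y_{k-1}^2[1-q-(1-q)]=0$. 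Solving $(L+\mu A_k)z_k^2=L z_0^2$ yields the claim, and the same derivation covers the base case $k=0$ trivially since $A_0=0$.

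The main obstacle is being confident that the unstated ``reformulated exactly as'' identity in Lemma~\ref{thm:potential} really does reduce, once $P(A_{k+1},A_k)=0$, to the clean identification ``weighted sum of interpolation slacks $=\phi_{k+1}-\phi_k$''; this is the only place where one has to trust (or re-derive) the algebraic decomposition behind that lemma, but no new computation is required for the present claim beyond the saturation of Theorem~\ref{thm:interp} on the two quadratics.
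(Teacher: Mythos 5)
Your proof is correct, but it takes a genuinely different route from the paper's. The paper proves the lemma by direct induction on the iterates: for $f_L$ it observes that $x_{k+1}=y_k-\tfrac1L\nabla f_L(y_k)=0=x_\star$, hence $y_k=(1-\beta_k)z_k$ and $z_{k+1}=\bigl((1-\cond)\beta_k\delta_k-\delta_k+1\bigr)z_k$; for $f_\mu$ it observes $y_k-\tfrac1\mu\nabla f_\mu(y_k)=0$, hence $z_{k+1}=(1-\cond\delta_k)z_k$; in both cases it then substitutes $\beta_k,\delta_k,A_{k+1}$ and the induction hypothesis and verifies by explicit algebra that $z_{k+1}^2=z_0^2/(1+\cond A_{k+1})$. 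You instead exploit extremality: since Theorem~\ref{thm:interp} is saturated (holds with equality) on both quadratics for every pair of points — your checks of this, and of $\psi_{k-1}=0$, are correct — the two weighted interpolation inequalities in the proof of Lemma~\ref{thm:potential} are equalities, so the exact reformulation there together with $P(A_{k+1},A_k)=0$ gives the conservation law $\phi_{k+1}=\phi_k$, and $\phi_k=(L+\mu A_k)z_k^2=Lz_0^2$ yields the claim. Your argument buys a structural explanation of \emph{why} these simple quadratics attain the worst case (they make every inequality used in the analysis tight) and avoids the closed-form algebraic verification of the recursions; its cost is that it leans on the unstated exact identity behind Lemma~\ref{thm:potential} (verified symbolically in the paper's companion notebook), which you correctly flag as the only point of trust, whereas the paper's induction is elementary and self-contained, needing only the ``basic algebra'' identity relating $A_k$ and $A_{k+1}$. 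Both are valid; just make sure, if you write yours up, to state explicitly that the reformulation in Lemma~\ref{thm:potential} is an algebraic identity (so equality of the slacks really does force $\phi_{k+1}=\phi_k$), and that the $k=0$ case is covered since $\beta_0=0$ and $A_0=0$.
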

\begin{proof} We proceed by recurrence. It is clear that $z_0^2=\tfrac{z_0^2}{1+\cond A_0}$ (recall $A_0=0$), which establishes the base recurrence case.

(i) Let us start with $f_L$. It is clear from explicit computations that for all $y_k\in\mathbb{R}$, $x_{k+1}=y_k-\tfrac1{L}\nabla f_{L}(y_k)=x_\star=0$. Therefore, we have $y_k=\tfrac1L\nabla f_L(y_k)$ along with
\[ y_{k}=(1-\beta_k)z_k\]
(this also trivially holds for $k=0$, as in this case $\beta_k=0$ and $y_0=z_0=x_0$), and therefore
\[z_{k+1}=z_k+\cond \delta_k (y_k-z_k)-\delta_k y_k= ((1-\cond) \beta_k \delta_k-\delta_k+1)z_k.\]
Substituting the expressions of $\beta_k$, $\delta_k$, $A_{k+1}$, and $z_k^2=\tfrac{z_0^2}{1+\cond A_k}$ in this equality (squared) leads to
\[z_{k+1}^2= \frac{\left(1+\cond  A_k-\cond  \sqrt{(1+A_k) (1+\cond  A_k)}\right)^2}{(1+\cond  A_k) (1+\cond+\cond  A_k)^2} z_0^2=\frac{z_0^2}{1+\cond A_{k+1}}, \]
where the last equality can be verified by basic algebra.

(ii) We proceed with $f_{\mu}$. In this case, for all $y_k\in\mathbb{R}$ we have $y_k-\tfrac1{\mu}\nabla f_{\mu} (y_k)=x_\star=0$. Therefore,
\[z_{k+1}=(1-\cond \delta_k)z_k.\]
Substituting the expression of $\delta_k$ and the recurrence hypothesis $z_k^2=\tfrac{z_0^2}{1+\cond A_k}$ we arrive to the same expression as before
\[z_{k+1}^2= \frac{\left(1+\cond  A_k-\cond  \sqrt{(1+A_k) (1+\cond  A_k)}\right)^2}{(1+\cond  A_k) (1+\cond+\cond  A_k)^2} z_0^2=\frac{z_0^2}{1+\cond A_{k+1}}, \]
reaching the desired claim.\qed
\end{proof}}

% We have established that~\shortMethodName{} achieves a lower complexity bound for smooth strongly convex minimization. The next section provides a constructive way of discovering the method.

\section{A constructive approach to \shortMethodName}\label{s:optimized_steps}
The intent of this section is to provide a constructive procedure for obtaining the \fullMethodName, as well as other similar methods designed based on alternate optimality criteria.

The construction is based on the performance estimation methodology introduced in \citep{drori2013performance,taylor2017smooth}, where the main idea is to cast the theoretical worst-case performance of a generic first-order method as an optimization program over all possible problem instances.
Once such a program has been devised, it can then be manipulated using standard techniques, and in particular, this allows us to state the problem of finding the ``best'' first-order method as a minimax problem. Although this minimax problem appears at first to be hard, we show that a tractable relaxation of it can be devised, and that~\shortMethodName{} can be obtained as an analytical solution to that problem.
We would like to emphasize that although \shortMethodName{} was discovered using the technique described below, its proof, as provided above, is independent of the following.

As a starting point, consider the class of black-box first-order methods gathering information about the objective function $f$ only by evaluating an \emph{oracle} $\mathcal{O}_f(x)=(f(x),\nabla f(x))$. We describe such a black-box method $M$ as a set of rules $\{M_1,M_2,\hdots,M_N\}$ for forming its iterates, which we denote by $w_k$ for avoiding confusions with any of the sequences defined by \shortMethodName, as %<<<Replace x_0 by w_0>>>
\begin{equation*}
\begin{aligned}
w_1&=M_1(w_0,\mathcal{O}_f(w_0))\\
w_2&=M_2(w_0,\mathcal{O}_f(w_0),\mathcal{O}_f(w_1))\\
&\vdots\\
w_N&=M_N(w_0,\mathcal{O}_f(w_0),\mathcal{O}_f(w_1),\hdots,\mathcal{O}_f(w_{N-1})),
\end{aligned}
\end{equation*}
and we denote by $\mathcal{M}_N$ the set of black-box first-order methods that perform $N$ gradient evaluations. Furthermore, we call the \emph{efficiency estimate} of a method $M$ the following quantity 
\begin{equation}\label{eq:eff_estimate}
\begin{aligned}
W_{\mu,L}(M)=\sup_{f\in\mathcal{F}_{\mu,L}} \bigg\{\frac{\|w_N-w_\star\|^2}{\|w_0-w_\star\|^2}:\,& \text{for any sequence $w_1,\dots,w_N$ generated by $M$ on $f$},\\&\text{initiated at some $w_0$, and $w_\star\in\mathrm{argmin}_wf(w)$}\bigg\},
\end{aligned}
\end{equation}
which correspond to the worst-case performance of $M$ on the class $\mathcal{F}_{\mu,L}$ for the criterion $\tfrac{\|w_N-w_\star\|^2}{\|w_0-w_\star\|^2}$. A~direct consequence of Theorem~\ref{corr:finalbound} and the lower complexity bound discussed in Section~\ref{s:LB} is that \shortMethodName{} belong to the class of black-box first-order methods with optimal performances with respect to $W_{\mu,L}(M)$ with $M\in\mathcal{M}_N$. \shortMethodName{} is therefore a solution to
\begin{equation}\label{eq:blackbox_optim}
\begin{aligned}
\min_{M\in\mathcal{M}_N}\,W_{\mu,L}(M).
\end{aligned}
\end{equation}
{Although this minimax problem appears to be hard to solve directly,}
we illustrate below that it can be approached using semidefinite programming. 

In a nutshell, we consider two simplified upper bounds to this minimax problem. First, we consider a subclass of black-box first-order methods, referred to as \emph{fixed-step first-order methods}. Those are first-order methods that are described by a set of fixed coefficients $\{h_{i,j}\}$, and whose formal description is provided below. Second, given a fixed-step first-order method $M$, the idea is to develop a tractable upper bound on the efficiency estimate of $M$, written $\mathrm{UB}_{\mu,L}(M)$ and such that $\mathrm{UB}_{\mu,L}(M)\geq W_{\mu,L}(M)$. 
After that, we show that minimizing this upper bound over $M$ is also tractable. That is, we can solve $\min_{\{h_{i,j}\}}\mathrm{UB}_{\mu,L}(M)$ to obtain the \fullMethodName{} as a solution.

As a comparison, let us mention that the Optimized Gradient Method~\citep{drori2013performance,kim2015optimized} was obtained through similar steps for the objective $({f(w_N)-f_\star})/{\|w_0-w_\star\|^2}$ when~$\mu=0$. Note, however, that a straightforward application of the technique presented in~\citep{drori2013performance,kim2015optimized} does not yield tractable problems in the strongly convex case.

More precisely, we proceed as follows:
\begin{itemize}
    \item In Section~\ref{s:FO_class}, we describe the class of fixed-step first-order methods. This class of methods is somewhat natural and contains classical numerical methods such as gradient, heavy-ball, and accelerated gradient methods, but excludes adaptive methods. For this class of methods, it is known that $W_{\mu,L}(M)$ can be formulated as a convex semidefinite program (see e.g.,~\citep[Theorem 6]{taylor2017smooth}). However, when it comes to optimizing over step size parameters, this formulation leads to a bilinear/quadratic problem which we do not know how to solve directly.
    \item In Section~\ref{s:PEP} and~\ref{s:PEP_SDP}, we provide an equivalent reparametrization of the class of fixed-step first-order methods, allowing to reach an alternate semidefinite formulation for $W_{\mu,L}(M)$ with simpler structure. We further detail a tractable upper bound $\mathrm{UB}_{\mu,L}(M)$ which is more convenient for optimizing over the method's parameters.
    \item In Section~\ref{s:approx_minimax}, we show how to render $\min_{\{h_{i,j}\}}\mathrm{UB}_{\mu,L}(M)$ tractable, yielding the \fullMethodName{} as a solution.
\end{itemize}

We complement those developments by numerically designing first-order methods for alternate design criterion that include $(f(w_N)-f_\star)/\|{w_0}-w_\star\|^2$. For doing that, the developments of this section have to be slightly adapted (see Appendix~\ref{a:func_values}). The  numerical results are provided in Appendix~\ref{a:numerics}, and source code for reproducing the results is provided in Section~\ref{s:ccl}.

\subsection{Fixed-step first-order methods}\label{s:FO_class}
In this section, we introduce a subclass of black-box first-order methods described by a set of fixed coefficients. This parametric subset of $\mathcal{M}_N$ allows for more convenient formulations of optimization problems over the class of methods, such as the minimax problem~\eqref{eq:blackbox_optim}.

We start with the following ``{natural}'' description of the class of methods of interest, then introduce an alternate parametrization which is more convenient for the step size optimization procedure of the following sections.
\begin{definition}\label{def:FS} A black box first-order method is called a \emph{fixed-step first-order method} if there exists a set $\{h_{i,j}\}\subset \mathbb{R}$ 
% which allows describing the iterates of the method as
such that the method admits the following description
\begin{equation}\label{eq:method}
\begin{aligned}
w_1&=w_0-\tfrac{h_{1,0}}{L} \nabla f(w_0)\\
w_2&=w_1-\tfrac{h_{2,0}}{L} \nabla f(w_0)-\tfrac{h_{2,1}}{L}\nabla f(w_1)\\
&\vdots\\
w_N&=w_{N-1}-\sum_{i=0}^{N-1}\tfrac{h_{N,i}}{L}\nabla f(w_i),
\end{aligned}
\end{equation}
for any function $f$.
\end{definition}
For fixed-step first-order methods $M$ described by a set of normalized coefficients $\{h_{i,j}\}$, it is shown in~\citep[Theorem 6]{taylor2017smooth} that $W_{\mu,L}(M)$ can be formulated as a convex semidefinite program~(SDP). Given such an SDP formulation, our goal is to solve
\[  \min_{\{h_{i,j}\}}W_{\mu,L}(M),\]
which is a bilinear/quadratic problem, due to the structure of the SDP formulation of $W_{\mu,L}(M)$ in~\citep[Theorem 6]{taylor2017smooth}. Such problems are nonconvex and NP-hard in general~\citep{toker1995np},
nevertheless, by performing reparametrization followed by relaxation and linearization steps, as shown in the following sections, it is possible to attain a tractable relaxation of the problem.

\subsection{A reparametrization of fixed-step first-order methods}
In what follows, we restrict ourselves to these fixed-step first-order methods, which we will reparameterize in a slightly different, but equivalent, fashion.  Informally, the alternate parameterization allows formulating the maximization problem arising in the efficiency estimate $W_{\mu,L}(M)$ (see~\eqref{eq:eff_estimate}) in a more convenient way than that of~\citep[Theorem 6]{taylor2017smooth} for our purposes. Indeed, the new formulation presented in the next sections allows obtaining a problem that is ``only'' bilinear in terms of the method parameters and of some multipliers $\lambda_{i,j}$'s. Those problems are still NP-hard in general~\citep{toker1995np}, however, in this case this simplification will enable us to optimize over the method parameters, a simplification that appears to be hard to reach with previous formulations.

In order to proceed, we express first-order methods for minimizing $f$ as acting instead on a function~$\tilde f$, using
\[ \tilde{f}(x):=f(x)-\tfrac{\mu}{2}\|x-w_\star\|^2, \]
where $w_\star$ is a minimizer of both $f$ and $\tilde{f}$. It is known (see e.g.~\citep{nest-book-04}) that  $f\in\mathcal{F}_{\mu,L}$ if and only if $\tilde f\in\mathcal{F}_{0,L-\mu}$. Then, one can express~\eqref{eq:method} in terms of evaluations of the gradient of $\tilde{f}$, instead of that of $f$. Concretely, we reformulate~\eqref{eq:method} in terms of some coefficients $\{\alpha_{i,j}\}$ as follows
\begin{equation}\label{eq:method_reparam}
\begin{aligned}
w_1-w_\star&=(w_0-w_\star)(1-\tfrac{\mu}{L}\alpha_{1,0})-\tfrac{\alpha_{1,0}}{L} \nabla \tilde f(w_0)\\
w_2-w_\star&=(w_0-w_\star)(1-\tfrac{\mu}{L}(\alpha_{2,0}+\alpha_{2,1}))-\tfrac{\alpha_{2,0}}{L} \nabla \tilde f(w_0)-\tfrac{\alpha_{2,1}}{L}\nabla \tilde f(w_1)\\
&\vdots\\
w_N-w_\star&=(w_{0}-w_\star)\left(1-\tfrac{\mu}{L}\sum_{i=0}^{N-1}\alpha_{N,i}\right)-\sum_{i=0}^{N-1}\tfrac{\alpha_{N,i}}{L}\nabla \tilde f(w_i).
\end{aligned}
\end{equation}
One can show that there is a bijection between representations~\eqref{eq:method} and~\eqref{eq:method_reparam}. Therefore, the problem of designing an optimal method in the form~\eqref{eq:method} is equivalent to that of devising an optimal method in the form~\eqref{eq:method_reparam}. This is formalized by the following lemma.
\begin{lemma}\label{lem:eq_param} Let $N\in\mathbb{N}$ and a first-order method $M\in\mathcal{M}_N$. The following statements are equivalent.
\begin{itemize}
    \item There exists a set $\{h_{i,j}\}_{i=1,\hdots,N;j=0,\hdots,i-1}$ such that for any $f\in\mathcal{F}_{\mu,L}$ and $w_0\in\mathbb{R}^d$ the sequence $\{w_k\}_{k=0,\hdots,N}\subset\mathbb{R}^d$ generated by $M$ satisfies~\eqref{eq:method} (i.e., $M$ is a fixed-step first-order method).
    \item There exists a set $\{\alpha_{i,j}\}_{i=1,\hdots,N;j=0,\hdots,i-1}$ such that for any $f\in\mathcal{F}_{\mu,L}$ and $w_0\in\mathbb{R}^d$ the sequence $\{w_k\}_{k=0,\hdots,N}\subset\mathbb{R}^d$ generated by $M$ satisfies~\eqref{eq:method_reparam}.
\end{itemize}
\end{lemma}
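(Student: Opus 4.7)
The plan is to pass between the two descriptions via the identity $\nabla f(x)=\nabla\tilde f(x)+\mu(x-w_\star)$, which is immediate from $\tilde f(x)=f(x)-\tfrac{\mu}{2}\|x-w_\star\|^2$. Both \eqref{eq:method} and \eqref{eq:method_reparam} unfold recursively so that $w_k-w_\star$ becomes a linear combination of $w_0-w_\star$ and of $\{\nabla f(w_i)\}_{i<k}$ in one case, or of $w_0-w_\star$ and of $\{\nabla\tilde f(w_i)\}_{i<k}$ in the other. Since these data are linearly independent across the class $\mathcal{F}_{\mu,L}$ (so matching must hold as a formal identity) and the two ``bases'' differ only by the above shift, the whole proof reduces to showing that the associated change of coefficients is triangular and invertible at each index $k$.

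For (i)$\Rightarrow$(ii) I would argue by induction on $k$, assuming each $w_i-w_\star$ for $i<k$ has been rewritten in the form \eqref{eq:method_reparam} with coefficients $\{\alpha_{i,j}\}_{j<i}$. Telescoping \eqref{eq:method} gives $w_k-w_\star=(w_0-w_\star)-\sum_{i=0}^{k-1}\tfrac{\tilde h_{k,i}}{L}\nabla f(w_i)$ with cumulative coefficients $\tilde h_{k,i}=\sum_{j=i+1}^{k}h_{j,i}$. Replacing each $\nabla f(w_i)$ by $\nabla\tilde f(w_i)+\mu(w_i-w_\star)$ and then substituting the inductive expression for $w_i-w_\star$ produces $(w_0-w_\star)c_k-\sum_j\tfrac{\alpha_{k,j}}{L}\nabla\tilde f(w_j)$, and reading off the coefficients of each $\nabla\tilde f(w_j)$ yields the explicit recursion $\alpha_{k,j}=\tilde h_{k,j}-\tfrac{\mu}{L}\sum_{i=j+1}^{k-1}\tilde h_{k,i}\alpha_{i,j}$. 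A short computation, combined with the inductive identity $c_i=1-\tfrac{\mu}{L}\sum_{j<i}\alpha_{i,j}$, then confirms $c_k=1-\tfrac{\mu}{L}\sum_{j<k}\alpha_{k,j}$, which is precisely the shape required by \eqref{eq:method_reparam}.

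The reverse direction (ii)$\Rightarrow$(i) is symmetric, via $\nabla\tilde f(x)=\nabla f(x)-\mu(x-w_\star)$. Differencing \eqref{eq:method_reparam} between consecutive indices writes $w_k-w_{k-1}$ as a combination of $(w_0-w_\star)$ and of $\{\nabla\tilde f(w_i)\}_{i<k}$; substituting the identity and then using the inductive expressions for $w_i-w_\star$ eliminates the $(w_0-w_\star)$ contribution and puts the update in the form \eqref{eq:method}. The cancellation is automatic precisely because the $(w_0-w_\star)$-coefficient in \eqref{eq:method_reparam} is tied to the sum of the $\alpha_{k,j}$'s. At each $k$, the resulting linear system relating $\{h_{k,j}\}_{j<k}$ and $\{\alpha_{k,j}\}_{j<k}$ is lower-triangular with unit diagonal (off-diagonal entries are $\mu/L$ times previously determined coefficients), hence invertible, yielding at the same time the claimed bijection between the two parameter sets.

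The main technical obstacle I anticipate is bookkeeping rather than insight: one must verify that the $(w_0-w_\star)$ coefficient matches automatically once the gradient coefficients do, so that the two representations carry exactly the same scalar degrees of freedom. Once this consistency is settled via the inductive identity for $c_k$, the rest of the argument is purely algebraic triangular substitution.
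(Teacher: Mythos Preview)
Your proposal is correct and follows essentially the same approach as the paper: both arguments use the identity $\nabla f=\nabla\tilde f+\mu(\cdot-w_\star)$ together with an induction that substitutes the already-reparametrized expressions for $w_i-w_\star$, yielding a lower-triangular (unit-diagonal) relationship between $\{h_{i,j}\}$ and $\{\alpha_{i,j}\}$. The only cosmetic difference is that you first telescope \eqref{eq:method} into cumulative coefficients $\tilde h_{k,i}$ before substituting, whereas the paper works directly with the one-step update and obtains the equivalent recursion $\alpha_{k+1,i}=h_{k+1,i}+\alpha_{k,i}-\tfrac{\mu}{L}\sum_{j=i+1}^{k}h_{k+1,j}\alpha_{j,i}$.
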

\begin{proof} The proof follows from a short recurrence argument (provided in Appendix~\ref{s:reparam}) for showing that the two representations are isomorphic, and that they are linked through the following triangular system of equations
\begin{equation}\label{eq:equiv_param}
\begin{aligned}
\alpha_{k+1,i}=\left\{\begin{array}{ll}
h_{k+1,k}     & \text{if } i=k \\
h_{k+1,i}+\alpha_{k,i}-\tfrac{\mu}{L}\sum_{j=i+1}^k h_{k+1,j}\alpha_{j,i}\quad    & \text{if } 0\leq i <k.
\end{array}\right.
\end{aligned}
\end{equation}
Therefore, although we use~\eqref{eq:method_reparam} in the following sections, any method formulated in terms of $\{\alpha_{i,j}\}$ can be converted to the more natural $\{h_{i,j}\}$ notation, and reciprocally. \qed
\end{proof}

In the next section, we develop an upper bound on $W_{\mu,L}(M)$ of a form similar to that of~\citep[Theorem 6]{taylor2017smooth}, but which is linear in $\{\alpha_{i,j}\}$, instead of quadratic in $\{h_{i,j}\}$.
\subsection{A performance estimation problem and its relaxation}\label{s:PEP}

The goal of this section is to construct an upper bound on~\eqref{eq:eff_estimate} that can be computed efficiently. The reformulation and relaxation techniques used for obtaining the upper bound are not new and rely on the same steps as those taken in~\citep{taylor2017smooth} (so readers familiar with such procedures can safely fly over the section). We provide details which allows optimizing over the step sizes afterwards. Let us start by rephrasing~\eqref{eq:eff_estimate} as
\begin{equation*}
\begin{aligned}
W_{\mu,L}(M)=\sup_{\substack{\tilde f,d\in\mathbb{N}\\ \{w_i\}_{i\in I}\subset\mathbb{R}^d}}\,& \frac{\|w_{N}-w_\star\|^2}{\|w_0-w_\star\|^2}\\ \text{s.t. } & w_k \text{ generated by~\eqref{eq:method_reparam} applied on $\tilde{f}$, and initiated at $w_0$,}\\
&\tilde f\in\mathcal{F}_{0,L-\mu}(\mathbb{R}^d),\\
&\,w_\star\in\mathrm{argmin}_w \tilde f(w),
\end{aligned}
\end{equation*}
where we used an index set $I=\{\star,0,\hdots,N\}$. Note the maximization over $d$, which aims at obtaining dimension-independent guarantees. For such problems, it is known that the supremum is attained (see e.g.,~\citep[Proposition 1]{taylor2017smooth}), and we therefore use ``$\max$'' instead of ``$\sup$'' in what follows.

As a first step towards an ``efficient'' upper bound, we reformulate~\eqref{eq:eff_estimate} using an {\emph{extension}} (or interpolation) argument. That is, the previous maximization problem can be restated using an existence argument for replacing the function by a finite set of samples. In other words, we optimize over the oracle's responses while keeping the responses consistent with assumptions on $f$
\begin{equation*}
\begin{aligned}
\max_{\substack{\{(w_i,g_i,f_i)\}_{i\in I}\subset\mathbb{R}^d\times\mathbb{R}^d\times\mathbb{R}\\d\in\mathbb{N}}} &\frac{\|w_{N}-w_\star\|^2}{\|w_0-w_\star\|^2}\\
\text{s.t. }& w_{k} \text{ generated by~\eqref{eq:method_reparam} for } k=1,\hdots,N,\\
&\exists \tilde f\in\mathcal{F}_{0,L-\mu}(\mathbb{R}^d): g_i\in\partial \tilde f(w_i),\, f_i=\tilde f(w_i) \, \forall i\in I,\\
& g_\star=0.
\end{aligned}
\end{equation*} 
Using an homogeneity argument, one can reformulate this problem without the fractional objective. More precisely, for any feasible point $S=\{({w_i},g_i,f_i)\}_{i\in I}$ and any $\alpha>0$, the point $S'=\{({\alpha w_i}, \alpha  g_i,\alpha^2 f_i)\}_{i\in I}$ is also feasible while reaching the same objective value (this can be verified using the definition of $\mathcal{F}_{\mu,L}$). We can therefore arbitrarily fix the scale of the problem to $\|w_0-w_\star\|=1$, reaching the following problem with the same optimal value
\begin{equation*}
\begin{aligned}
\max_{\substack{\{(w_i,g_i,f_i)\}_{i\in I}\subset\mathbb{R}^d\times\mathbb{R}^d\times\mathbb{R}\\d\in\mathbb{N}}} &\|w_{N}-w_\star\|^2\\
\text{s.t. }& \|w_0-w_\star\|^2=1,\\
&w_{k} \text{ generated by~\eqref{eq:method_reparam} for } k=1,\hdots,N,\\&\exists \tilde f\in\mathcal{F}_{0,L-\mu}(\mathbb{R}^d): g_i\in\partial \tilde f(w_i),\, f_i=\tilde f(w_i) \, \forall i\in I,\\
& g_\star=0.
\end{aligned}
\end{equation*} 
It follows from Theorem~\ref{thm:interp2} that the previous problem can be reformulated exactly as
\begin{equation}\label{eq:pep}
\begin{aligned}
\max_{\substack{\{(w_i,g_i,f_i)\}_{i\in I}\subset\mathbb{R}^d\times\mathbb{R}^d\times\mathbb{R}\\d\in\mathbb{N}}} &\|w_{N}-w_\star\|^2\\
\text{s.t. }& \|w_0-w_\star\|^2=1,\\
&w_{k} \text{ generated by~\eqref{eq:method_reparam} for } k=1,\hdots,N,\\
&f_i\geq f_j+\langle g_j;x_i-x_j\rangle+\tfrac{1}{2(L-\mu)}\|g_i-g_j\|^2\,\,\, \forall i,j\in I,\\
& g_\star=0.
\end{aligned}
\end{equation} 
Whereas equivalence between the two previous problems might be regarded as technical, the fact~\eqref{eq:pep} produces upper bounds on~\eqref{eq:eff_estimate} is quite direct. Indeed, any $\tilde f\in\mathcal{F}_{0,L-\mu}$ satisfies the above inequalities, and hence any feasible point to~\eqref{eq:eff_estimate} can be converted to a feasible point to~\eqref{eq:pep} by sampling $\tilde f$.

In what follows, we use the following relaxation of~\eqref{eq:pep}, by incorporating only a specific subset of the previous quadratic inequalities, therefore forming an upper bound on the original problem. Many inequalities were removed because they introduce undesirable nonlinearities in the steps taken in the next sections. Perhaps luckily, this relaxation will turn out to be tight for evaluating $W_{\mu,L}(M)$ of~\shortMethodName{}.
\begin{align}
\max_{\substack{\{(w_i,g_i,f_i)\}_{i\in I}\subset\mathbb{R}^d\times\mathbb{R}^d\times\mathbb{R}\\d\in\mathbb{N}}}&\|w_N-w_\star\|^2&&\notag\\
\text{s.t. }& \|w_0-w_\star\|^2=1,\, g_\star=0,&&\label{eq:relaxed}\tag{R}\\
&w_{k} \text{ generated by~\eqref{eq:method_reparam} } &&\text{for }k=1,\hdots,N,\notag\\
& f_i\geq f_{i+1}+\langle g_{i+1};w_i-w_{i+1}\rangle +\tfrac1{2(L-\mu)}\|g_i-g_{i+1}\|^2 &&\text{for }i=0,\hdots,N-2,\notag\\
& f_\star\geq f_i+\langle g_i,w_\star-w_{i}\rangle +\tfrac1{2(L-\mu)}\|g_{i}\|^2 &&\text{for }i=0,\hdots,N-1,\notag\\
& f_{N-1}\geq f_\star+\tfrac1{2(L-\mu)}\|g_{N-1}\|^2. &&\notag
\end{align}
As shown in the next section, this problem is semidefinite-representable and we can thus use standard packages for approximating its solution numerically. Looking at the structure of these numerical solutions helped us to choose this particular relaxation.

The following lemma summarizes what we have obtained so far, that is,  $W_{\mu,L}(M)\leq\mathrm{val}\text{\eqref{eq:relaxed}}$, where $\mathrm{val}\text{\eqref{eq:relaxed}}$ denotes the optimal value of~\eqref{eq:relaxed}.
\begin{lemma}\label{lem:pep}Let $N\in\mathbb{N}$, $0\leq\mu<L<\infty$, and $M\in\mathcal{M}_N$ be a fixed-step first-order method~\eqref{eq:method_reparam} performing $N$ gradient evaluations and described by a set of coefficients $\{\alpha_{i,j}\}_{i,j}$. For any $d\in\mathbb{N}$, $f\in\mathcal{F}_{\mu,L}(\mathbb{R}^d)$, $w_\star\in\mathrm{argmin}_w f(w)$, initial guess $w_0\in\mathbb{R}^d$, and $w_N=M(w_0,f)$, it holds that
\[\|w_N-w_\star\|^2\leq \mathrm{val}\text{\eqref{eq:relaxed}}\, \|w_0-w_\star\|^2,\]
where $\mathrm{val}\text{\eqref{eq:relaxed}}$ denotes the optimal value of~\eqref{eq:relaxed}. 
\end{lemma}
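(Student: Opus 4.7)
The plan is to verify the chain of reformulations and relaxations already outlined informally in the text, and to check that every step preserves the inequality direction $W_{\mu,L}(M) \leq \mathrm{val}\text{\eqref{eq:relaxed}}$. The overall strategy is: (i) rewrite $W_{\mu,L}(M)$ as a maximization over $f\in\mathcal{F}_{\mu,L}$ using the reparameterization $\tilde{f}=f-\tfrac{\mu}{2}\|\cdot-w_\star\|^2$, (ii) use $\mathcal{F}_{\mu,L}$--interpolation (Theorem~\ref{thm:interp2}, specialized to $\mathcal{F}_{0,L-\mu}$) to drop $\tilde{f}$ in favor of a finite set of triplets $(w_i,g_i,f_i)$, (iii) normalize by homogeneity, and (iv) relax by discarding inequalities.

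First I would fix $M\in\mathcal{M}_N$ described by $\{\alpha_{i,j}\}$ via~\eqref{eq:method_reparam}. For any $f\in\mathcal{F}_{\mu,L}(\mathbb{R}^d)$, $w_0\in\mathbb{R}^d$, and $w_\star\in\mathrm{argmin}_w f(w)$, set $\tilde{f}(x):=f(x)-\tfrac{\mu}{2}\|x-w_\star\|^2$. Then $\tilde{f}\in\mathcal{F}_{0,L-\mu}(\mathbb{R}^d)$, $w_\star\in\mathrm{argmin}_w \tilde{f}(w)$ so $\nabla\tilde{f}(w_\star)=0$, and $\nabla\tilde{f}(w_i)=\nabla f(w_i)-\mu(w_i-w_\star)$. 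A short induction using~\eqref{eq:method_reparam} (this is exactly the content behind Lemma~\ref{lem:eq_param}, since the $\alpha$'s encode the update for $\tilde{f}$ while the $h$'s encode it for $f$) shows that the iterates $w_k$ produced by $M$ are identical in both parameterizations. In particular, $\|w_N-w_\star\|^2/\|w_0-w_\star\|^2$ is unchanged. This means $W_{\mu,L}(M)$ equals the supremum of the same ratio, taken now over triples $(\tilde{f},w_0,w_\star)$ with $\tilde{f}\in\mathcal{F}_{0,L-\mu}$, $w_\star\in\mathrm{argmin}\tilde{f}$, and iterates generated by~\eqref{eq:method_reparam}.

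Next I would substitute sampled triplets $(w_i,g_i,f_i)_{i\in I}$ with $I=\{\star,0,\dots,N\}$, defining $g_i=\nabla\tilde{f}(w_i)$ and $f_i=\tilde{f}(w_i)$, enforce $g_\star=0$, and invoke Theorem~\ref{thm:interp2} (for the class $\mathcal{F}_{0,L-\mu}$) to certify that such a family of triplets comes from some $\tilde{f}\in\mathcal{F}_{0,L-\mu}(\mathbb{R}^d)$ if and only if the full collection of pairwise interpolation inequalities holds. This yields an exact reformulation of $W_{\mu,L}(M)$ as a maximization problem over samples. By the homogeneity argument already sketched in the text (scaling $(w_i-w_\star,g_i,f_i)\mapsto (\alpha(w_i-w_\star),\alpha g_i,\alpha^2 f_i)$ preserves both feasibility and the objective ratio), one can fix $\|w_0-w_\star\|^2=1$ without loss of generality, matching the normalization in~\eqref{eq:relaxed}.

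Finally, I would \emph{drop} all pairwise interpolation inequalities except the three families retained in~\eqref{eq:relaxed}: the chain inequalities $(i\to i+1)$ for $i=0,\dots,N-2$, the inequalities $(i\to\star)$ for $i=0,\dots,N-1$, and the single inequality $(\star\to N-1)$. Since every feasible point of the exact problem remains feasible when constraints are removed, and the objective is the same, this can only increase the optimal value. Hence $W_{\mu,L}(M)\leq \mathrm{val}\text{\eqref{eq:relaxed}}$ in the normalized problem, which is equivalent by homogeneity to $\|w_N-w_\star\|^2\leq \mathrm{val}\text{\eqref{eq:relaxed}}\cdot\|w_0-w_\star\|^2$. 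The only subtlety is making sure that the dropped inequalities are genuinely unused downstream and that the retained ones, written for $\tilde{f}\in\mathcal{F}_{0,L-\mu}$, produce the precise constants $\tfrac{1}{2(L-\mu)}$ that appear in~\eqref{eq:relaxed}; both are immediate from Theorem~\ref{thm:interp2}. There is no real technical obstacle — the proof is a bookkeeping exercise chaining exact reformulations with a single relaxation step.
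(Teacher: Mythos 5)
Your proposal is correct and follows essentially the same route as the paper: the paper treats Lemma~\ref{lem:pep} as a summary of the chain in Section~\ref{s:PEP} (reparametrization via $\tilde f=f-\tfrac{\mu}{2}\|\cdot-w_\star\|^2\in\mathcal{F}_{0,L-\mu}$, sampling/interpolation via Theorem~\ref{thm:interp2}, homogeneity normalization, then dropping constraints, which can only increase the optimal value). Your observation that the upper-bound direction only needs the ``sampling a genuine $\tilde f$ yields a feasible point'' direction is exactly the remark the paper makes after~\eqref{eq:pep}, so there is nothing to add.
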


\subsection{Tractable upper bounds using semidefinite programming (SDP)}\label{s:PEP_SDP}
We now show how to reach a standard SDP formulation for~\eqref{eq:relaxed}. One can reformulate the maximization problem~\eqref{eq:relaxed} in terms of the variables $(G,F)$ (after substituting $w_k$'s by their expressions) defined by
\begin{equation}\label{eq:gram_representation}
\begin{aligned}
G&=\begin{pmatrix} \|w_0-w_\star\|^2 & \langle g_0;w_0-w_\star\rangle & \langle g_1;w_0-w_\star\rangle &\hdots & \langle g_{N-1};w_0-w_\star\rangle\\
\langle g_0;w_0-w_\star\rangle & \|g_0\|^2 & \langle g_1;g_0\rangle & \hdots &\langle g_{N-1};g_0\rangle\\
\langle g_1;w_0-w_\star\rangle & \langle g_1;g_0\rangle & \|g_1\|^2 &\hdots &\langle g_{N-1};g_1\rangle\\
\vdots & \vdots & \vdots &\ddots & \vdots\\
\langle g_{N-1};w_0-w_\star\rangle & \langle g_{N-1};g_0\rangle & \langle g_{N-1};g_1\rangle &\hdots &\|g_{N-1}\|^2\\
\end{pmatrix} \succeq 0,\\
F&=\begin{pmatrix}f_0-f_\star \\ f_1-f_\star \\ \vdots \\ f_{N-1}-f_\star \end{pmatrix},
\end{aligned}
\end{equation}
Formally, let us introduce the following notations for picking elements in $G$ and $F$ and conveniently formulating the SDPs
\[ \bxx_0=e_1\in\mathbb{R}^{N+1},\quad \bgg_i=e_{i+2}\in\mathbb{R}^{N+1},\quad \bff_i=e_{i+1}\in\mathbb{R}^{N},\]
with $i=0,\hdots,N-1$ and $e_i$ being the unit vector whose $i$th component is equal to $1$. In addition, we also denote by
\[\bxx_k=\bxx_0\left(1-\tfrac{\mu}{L}\sum_{i=0}^{k-1}\alpha_{k,i}\right)-\sum_{i=0}^{k-1}\tfrac{\alpha_{k,i}}{L}\bgg_i,\]
for $i=0,\hdots,N$ (note that $\bxx_k$ is therefore linearly parameterized by $\{\alpha_{k,i}\}$). Those notations allow to express the objective and constraints of~\eqref{eq:relaxed} directly in terms of $G$ and $F$ using the following identities
\begin{equation*}
\begin{aligned}
&f_i-f_\star= F\bff_i && i=0,1,\hdots,N-1,\\
&\| g_i\|^2= \bgg_i^\top G\bgg_i && i=0,1,\hdots,N-1,\\
&\| w_i-w_\star\|^2= \bxx_i^\top G\bxx_i && i=0,1,\hdots,N,\\
&\langle g_i;w_j-w_\star\rangle=\bgg_i^\top G \bxx_j \quad && i=0,1,\hdots,N-1,\, j=0,1,\hdots,N.
\end{aligned}
\end{equation*}
Using those notations, any feasible point to~\eqref{eq:relaxed} can be transformed to a feasible point to the following~\eqref{eq:SDP_PEP}, using the Gram matrix representation. Hence, the optimal value to the following problem is an upper bound on that of~\eqref{eq:relaxed}. Note that an argument along the lines of \cite[Theorem~5]{taylor2017smooth} can be used to establish that the optimal value to this problem is actually equal to that of~\eqref{eq:relaxed}, however, as we only exploit the upper bound below we will omit the proof for this property.
\begin{align}\max_{\substack{G\in\mathbb{S}^{N+1}\\F\in\mathbb{R}^{N}\\d\in\mathbb{N}}} & \bxx_N^\top G \bxx_N&&\notag \\
\text{s.t. }& G\succeq 0,\label{eq:SDP_PEP}\tag{SDP-R}\\
& \bxx_0^\top G \bxx_0=1,&&\notag\\
& 0\geq  (\bff_{i+1}-\bff_i)^\top F+ \bgg_{i+1}^\top G(\bxx_i-\bxx_{i+1}) +\tfrac1{2(L-\mu)}(\bgg_i-\bgg_{i+1})^\top G(\bgg_i-\bgg_{i+1}) &&\text{for }i=0,\hdots,N-2,\notag \\
& 0\geq  \bff_i^\top F-\bgg_{i}^\top G\bxx_{i} +\tfrac1{2(L-\mu)}\bgg_i^\top G\bgg_i &&\text{for }i=0,\hdots,N-1,\notag \\
& 0\geq  -\bff_{N-1}^\top F+\tfrac1{2(L-\mu)}\bgg_{N-1}^\top G\bgg_{N-1},&&\notag \\
& \mathrm{rank}(G)\leq d.&&\notag
\end{align}
After getting rid of the variable $d$ and the rank constraint (which is void due to maximization over $d$){, this problem is a {linear} SDP, parametrized by $L>\mu\geq0$, and $\{\alpha_{i,j}\}$.}

For transforming the minimax problem to a bilinear minimization problem, the next key step in our procedure is to express the Lagrangian dual of~\eqref{eq:SDP_PEP}, { substituting the inner maximization problem by a minimization, hence replacing the minimax by a minimization problem. Note that we do not assume strong duality, as weak duality suffices for obtaining an upper bound on the original problem}. That is, we perform the following primal-dual associations
\begin{equation*}
\begin{aligned}
&\|w_0-w_\star\|^2=1 && &&:\tau,\\
& f_i\geq f_{i+1}+\langle g_{i+1};w_i-w_{i+1}\rangle +\tfrac1{2(L-\mu)}\|g_i-g_{i+1}\|^2 &&\text{for }i=0,\hdots,N-2&&:\lambda_{i,i+1},\\
& f_\star\geq f_i+\langle g_i,w_\star-w_{i}\rangle +\tfrac1{2(L-\mu)}\|g_{i}\|^2 &&\text{for }i=0,\hdots,N-1&&:\lambda_{\star,i},\\
& f_{N-1}\geq f_\star+\tfrac1{2(L-\mu)}\|g_{N-1}\|^2 && &&:\lambda_{N-1,\star},
\end{aligned}    
\end{equation*}
and arrive to the following dual formulation of~\eqref{eq:SDP_PEP}, whose optimal value is denoted by $\mathrm{UB}_{\mu,L}(M)$
\begin{align}
\mathrm{UB}_{\mu,L}(\{\alpha_{i,j}\}):=\min_{\tau,\lambda_{i,j}\geq0}& \tau,&&\notag\\\
\text{s.t.}&\,S(\tau,\{\lambda_{i,j}\},\{\alpha_{i,j}\})\succeq 0,&&\label{eq:dual_relax_sdp}\tag{dual-SDP-R}\\
&\sum_{i=0}^{N-2} \lambda_{i,i+1}(\bff_{i+1}-\bff_i)+\sum_{i=0}^{N-1}\lambda_{\star,i}\bff_i-\lambda_{N-1,\star}\bff_{N-1}=0\notag,
\end{align}
with (note the dependence on $\{\alpha_{i,j}\}$ via $\bxx_i$'s)
\begin{equation*}
\begin{aligned}
S(\tau,\{\lambda_{i,j}\}{,\{\alpha_{i,j}\}})=&\tau\, \bxx_0\bxx_0^\top-\bxx_{N}\bxx_{N}^\top+\frac{\lambda_{N-1,\star}}{2(L-\mu)}\bgg_{N-1}\bgg_{N-1}^\top+\sum_{i=0}^{N-1} \frac{\lambda_{\star,i}}{2}\left(-\bgg_i \bxx_i^\top -\bxx_i\bgg_i^\top+\tfrac{1}{L-\mu}\bgg_i\bgg_i^\top\right)\\
&+\sum_{i=0}^{N-2}\frac{\lambda_{i,i+1}}{2}\left(\bgg_{i+1}(\bxx_i-\bxx_{i+1})^\top+(\bxx_i-\bxx_{i+1})\bgg_{i+1}^\top+\tfrac{1}{L-\mu}(\bgg_i-\bgg_{i+1})(\bgg_i-\bgg_{i+1})^\top\right).
\end{aligned}
\end{equation*}
Note that in the case $N>1$, the equality constraint in~\eqref{eq:dual_relax_sdp} can be written as
\begin{equation}\label{eq:dual_lin_cons}
\begin{aligned}
& \lambda_{\star,0}-\lambda_{0,1}=0,&&\\
& \lambda_{i-1,i}+\lambda_{\star,i}-\lambda_{i,i+1}=0 \quad &&\text{for }i=1,\hdots,N-2,\\
& \lambda_{N-2,N-1}+\lambda_{\star,N-1}-\lambda_{N-1,\star}=0. &&
\end{aligned}
\end{equation}
When $N=1$, it reduces to $\lambda_{\star,0}-\lambda_{0,\star}=0$. The following lemma recaps the current situation.

\begin{lemma}\label{lem:ub_sdp} Let $N\in\mathbb{N}$, $0\leq\mu<L<\infty$, and $M\in\mathcal{M}_N$ be a black-box first-order method~\eqref{eq:method_reparam} performing $N$ gradient evaluations and described by a set of coefficients $\{\alpha_{i,j}\}_{i,j}$. For any $d\in\mathbb{N}$, $w_0\in\mathbb{R}^d$, $f\in\mathcal{F}_{\mu,L}(\mathbb{R}^d)$, $w_\star\in\mathrm{argmin}_w f(w)$, and $w_N=M(w_0,f)$, it holds that
\[\|w_N-w_\star\|^2\leq \mathrm{UB}_{\mu,L}(\{\alpha_{i,j}\})\, \|w_0-w_\star\|^2.\]
\end{lemma}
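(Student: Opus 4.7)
The plan is to chain three inequalities: (i) the performance-estimation bound from Lemma~\ref{lem:pep}; (ii) a Gram-matrix reformulation that lifts~\eqref{eq:relaxed} to the linear SDP~\eqref{eq:SDP_PEP}; and (iii) weak duality applied to~\eqref{eq:SDP_PEP}, whose Lagrange dual is precisely~\eqref{eq:dual_relax_sdp}. Since the paper has already spelled out most of these steps in the surrounding text, the proof amounts to assembling them and invoking weak duality.

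First, by Lemma~\ref{lem:pep} it suffices to show $\mathrm{val}\text{\eqref{eq:relaxed}}\leq \mathrm{UB}_{\mu,L}(\{\alpha_{i,j}\})$. For the first link, given any feasible point $\{(w_i,g_i,f_i)\}_{i\in I}$ of~\eqref{eq:relaxed} I would build the Gram matrix $G$ and vector $F$ as in~\eqref{eq:gram_representation}. By construction $G\succeq 0$, and the identities listed right after~\eqref{eq:gram_representation} show that the objective $\|w_N-w_\star\|^2$, the normalization $\|w_0-w_\star\|^2=1$, and every quadratic inequality of~\eqref{eq:relaxed} rewrite as a linear condition on $(G,F)$ matching those in~\eqref{eq:SDP_PEP} (with $\bxx_k$ being affine in $\bxx_0,\bgg_0,\ldots,\bgg_{k-1}$ through $\{\alpha_{i,j}\}$). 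Hence every feasible point of~\eqref{eq:relaxed} lifts to a feasible point of~\eqref{eq:SDP_PEP} with equal objective value, so $\mathrm{val}\text{\eqref{eq:relaxed}}\leq\mathrm{val}\text{\eqref{eq:SDP_PEP}}$.

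Second, I would establish $\mathrm{val}\text{\eqref{eq:SDP_PEP}}\leq \mathrm{UB}_{\mu,L}(\{\alpha_{i,j}\})$ by standard weak duality. The Lagrangian of~\eqref{eq:SDP_PEP} is formed by attaching a scalar multiplier $\tau$ to the equality $\bxx_0^\top G\bxx_0=1$, and nonnegative multipliers $\lambda_{i,i+1}$, $\lambda_{\star,i}$, $\lambda_{N-1,\star}$ to the three families of inequality constraints, using the primal-dual associations listed just before~\eqref{eq:dual_relax_sdp}. Collecting the terms that multiply $F$ yields precisely the linear equality displayed in~\eqref{eq:dual_relax_sdp} (equivalently~\eqref{eq:dual_lin_cons} for $N>1$), which makes the Lagrangian independent of $F$. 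Collecting the terms that multiply $G$ yields the symmetric matrix $S(\tau,\{\lambda_{i,j}\},\{\alpha_{i,j}\})$; imposing $S\succeq 0$ guarantees that the inner maximum over $G\succeq 0$ is bounded, and bounded by $\tau$. Weak duality then gives $\tau\geq \mathrm{val}\text{\eqref{eq:SDP_PEP}}$ for every dual feasible $(\tau,\{\lambda_{i,j}\})$, whence $\mathrm{val}\text{\eqref{eq:SDP_PEP}}\leq\mathrm{UB}_{\mu,L}(\{\alpha_{i,j}\})$. Chaining the three inequalities delivers the claim.

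The only step requiring real care is the explicit identification of $S(\tau,\{\lambda_{i,j}\},\{\alpha_{i,j}\})$: each inequality of~\eqref{eq:SDP_PEP} contributes a symmetric rank-one or rank-two term in the $\bxx_i$'s and $\bgg_j$'s, and because $\bxx_k$ is affine in $\bxx_0$ and $\{\bgg_j\}_{j<k}$ through $\{\alpha_{i,j}\}$, the assembled $S$ is polynomial in those quantities. This is routine but notationally heavy, and constitutes the main bookkeeping obstacle; since we only need weak duality, no constraint qualification or Slater condition is required, and the bound holds unconditionally.
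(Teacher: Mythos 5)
Your proposal is correct and follows essentially the same route as the paper: invoke Lemma~\ref{lem:pep}, lift any feasible point of~\eqref{eq:relaxed} to~\eqref{eq:SDP_PEP} via the Gram representation~\eqref{eq:gram_representation}, and conclude by weak duality that $\mathrm{val}\text{\eqref{eq:SDP_PEP}}\leq \mathrm{UB}_{\mu,L}(\{\alpha_{i,j}\})$. The only difference is that you spell out the Lagrangian/weak-duality step explicitly, which the paper simply cites; this adds no new content and changes nothing in the argument.
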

\begin{proof} The result follows Lemma~\ref{lem:pep}. More precisely, any feasible point to~\eqref{eq:relaxed} can be translated to a feasible point to~\eqref{eq:SDP_PEP} using the Gram matrix representation~\eqref{eq:gram_representation}, hence $\mathrm{val}\text{\eqref{eq:relaxed}}\leq \mathrm{val}\text{\eqref{eq:SDP_PEP}}$. Furthermore, weak duality implies $\mathrm{val}\text{\eqref{eq:SDP_PEP}}\leq\mathrm{val}\text{\eqref{eq:dual_relax_sdp}}= \mathrm{UB}_{\mu,L}(\{\alpha_{i,j}\})$. Therefore
\[ \mathrm{val}\text{\eqref{eq:relaxed}}\leq \mathrm{val}\text{\eqref{eq:SDP_PEP}}\leq \mathrm{UB}_{\mu,L}(\{\alpha_{i,j}\}),\]
and it follows that
\[\|w_N-w_\star\|^2\leq \mathrm{val}\text{\eqref{eq:relaxed}}\|w_0-w_\star\|^2\leq \mathrm{UB}_{\mu,L}(\{\alpha_{i,j}\})\, \|w_0-w_\star\|^2,\]
where the first inequality is due to Lemma~\ref{lem:pep}.\qed
\end{proof}

The last remaining difficulty is that $S({\cdot})$ appearing in~\eqref{eq:dual_relax_sdp} is bilinear in terms of the algorithmic parameters $\{\alpha_{i,j}\}$ (the vectors $\bxx_i$ depend linearly on those parameters) and the dual variables $\{\lambda_{i,j}\}$. Therefore, it might be unclear how to efficiently solve
\begin{equation*}
\begin{aligned}
\min_{\{\alpha_{i,j}\}} \mathrm{UB}_{\mu,L}(\{\alpha_{i,j}\})\equiv \min_{\{\alpha_{i,j}\}}\,\min_{\tau,\{\lambda_{i,j}\}\geq0}& \tau,&&\\
\text{s.t.}&\,S(\tau,\{\lambda_{i,j}\},\{\alpha_{i,j}\})\succeq 0,&&\\
&\sum_{i=0}^{N-2} \lambda_{i,i+1}(\bff_{i+1}-\bff_i)+\sum_{i=0}^{N-1}\lambda_{\star,i}\bff_i-\lambda_{N-1,\star}\bff_{N-1}=0,
\end{aligned}
\end{equation*}
as problems involving such bilinear matrix inequalities are NP-hard in general. In the next section, we employ a \emph{linearization} trick which allows tackling this specific problem.

\subsection{An approximate minimax and its semidefinite representation}\label{s:approx_minimax}
In this section, we proceed with the last stage of our construction, by showing how to solve
\begin{equation}\label{eq:min_upperbound}
\min_{\{\alpha_{i,j}\}} \mathrm{UB}_{\mu,L}(\{\alpha_{i,j}\}),    
\end{equation}
which is a minimization problem jointly on $\tau$, $\alpha_{i,j}$'s and $\lambda_{i,j}$'s.  As it is, the problem features a \emph{bilinear matrix inequality}. A few algebraic manipulations on the matrix $S$ allows rewriting the bilinear matrix inequality in terms of a matrix $S'$, in a slightly more explicit and convenient way (those manipulations are provided in Appendix~\ref{s:algebra}, where $S'$ is defined). This structure reveals that the following change of variables allows linearizing the bilinear matrix inequality {(for all $0\leq j<i=1,\ldots,N$)}
\begin{equation}\label{eq:design_SDP_dist}
\begin{aligned}
\beta_{i,j}=\left\{\begin{array}{ll}
\lambda_{i,i+1}\alpha_{i,j}   \quad  & \text{if } 0< i <N-1{,} \\
 \lambda_{N-1,\star}\alpha_{N-1,j}    & \text{if } i=N-1{,}\\
 \alpha_{N,j} &\text{if } i=N.
\end{array}\right.
\end{aligned}    
\end{equation}
As provided in Lemma~\ref{lem:final_construction} below, this change of variables is invertible for the problem under consideration. In other words, for any $N>1$ and $0\leq\mu<L$, one can solve~\eqref{eq:min_upperbound} via its reformulation (intermediate computations, involving a matrix $S'$ are provided in Appendix~\ref{s:algebra}; the important thing to see about those formulation is how variables $\{\alpha_{i,j}\}$ and $\{\lambda_{i,j}\}$ interact with each others), as
\begin{equation}\label{eq:minibound}\tag{Minimax-R}
\begin{aligned}
\min_{\substack{\tau,\{\lambda_{i,j}\}\geq0\\\{\beta_{i,j}\}}}& \tau&&\\
\text{s.t.}&\,\begin{pmatrix}S''(\tau,\{\lambda_{i,j}\},\{\beta_{i,j}\}) & \bxx_N \\ \bxx_N^\top & 1\end{pmatrix}\succeq 0,&&\\
&\,\sum_{i=0}^{N-2} \lambda_{i,i+1}(\bff_{i+1}-\bff_i)+\sum_{i=0}^{N-1}\lambda_{\star,i}\bff_i-\lambda_{N-1,\star}\bff_{N-1}=0,
\end{aligned}    
\end{equation}
which is a standard linear semidefinite program, with the following definitions (note the linear dependencies in all parameters $\tau,\{\lambda_{i,j}\},\{\beta_{i,j}\}$)
\begin{equation*}
\begin{aligned}
S''(\tau,\{\lambda_{i,j}\},&\{\beta_{i,j}\})\\=\,&\tfrac{1}{2(L-\mu)}\left(\lambda_{N-1,\star}\,\bgg_{N-1}\bgg_{N-1}^\top+\sum_{i=0}^{N-1}\lambda_{\star,i}\bgg_i\bgg_i^\top+\sum_{i=0}^{N-2}\lambda_{i,i+1}(\bgg_i-\bgg_{i+1})(\bgg_i-\bgg_{i+1})^\top\right)\\
&-\tfrac12\lambda_{\star,0}(\bgg_0\bxx_0^\top+\bxx_0\bgg_0^\top)-\sum_{i=1}^{N-2}\left(\lambda_{i,i+1}-\tfrac{\mu}{L}\sum_{j=0}^{i-1}\beta_{i,j}\right) \,\tfrac12 (\bgg_i\bxx_0^\top+\bxx_0\bgg_i^\top)\\&+\sum_{i=1}^{N-2}\sum_{j=0}^{i-1}\tfrac{\beta_{i,j}}{L}\,\tfrac12(\bgg_i\bgg_j^\top+\bgg_j\bgg_i^\top)-\left(\lambda_{N-1,\star}-\tfrac{\mu}{L}\sum_{j=0}^{N-2}\beta_{N-1,j}\right)\,\tfrac12(\bgg_{N-1}\bxx_0^\top+\bxx_0\bgg_{N-1}^\top)\\&+\sum_{j=0}^{N-2}\tfrac{\beta_{N-1,j}}{L}\,\tfrac12(\bgg_{N-1}\bgg_j^\top+\bgg_j\bgg_{N-1}^\top)+\sum_{i=0}^{N-2}\left(\lambda_{i,i+1}-\tfrac{\mu}{L}\sum_{j=0}^{i-1}\beta_{i,j}\right)\,\tfrac12(\bgg_{i+1}\bxx_0^\top+\bxx_0\bgg_{i+1}^\top)\\
&-\sum_{i=0}^{N-2}\sum_{j=0}^{i-1}\tfrac{\beta_{i,j}}{L}\,\tfrac12(\bgg_{i+1}\bgg_j^\top+\bgg_j\bgg_{i+1}^\top)+\tau \bxx_0\bxx_0^\top,
\end{aligned}
\end{equation*}
and $\bxx_N=\,\bxx_0\left(1-\tfrac{\mu}{L}\sum_{i=0}^{N-1}\alpha_{N,i}\right)-\sum_{i=0}^{N-1}\tfrac{\alpha_{N,i}}{L}\bgg_i$. From the solution to~\eqref{eq:minibound}, one can recover a fixed-step first-order method whose worst-case performance satisfies
\[ \|x_N-x_\star\|^2\leq\mathrm{val}\text{\eqref{eq:minibound}}\, \|x_0-x_\star\|^2,\]
as formalized by the next lemma.

\begin{lemma}\label{lem:final_construction}Let $N\in\mathbb{N}$ with $N>1$, and $0\leq \mu<L<\infty$. Furthermore, let $(\tau,\{\beta_{i,j}\},\{\lambda_{i,j}\})$ be a solution to~\eqref{eq:minibound}. The following statements hold.
\begin{itemize}
    \item[(i)] If $\lambda_{{k,k+1}}= 0$ {for some $k\in\{1,\ldots,N-2\}$}, then $\beta_{{k,j}}=0$ {for all $j=0,\ldots,k-1$}.
    \item[(ii)] If $\lambda_{N-1,\star}= 0$, then $\beta_{N-1,j}=0$ {for all $j=0,\ldots,N-2$}.
    \item[(iii)] Let $\{\alpha_{i,j}\}$ be defined as
    \begin{equation}\label{eq:design_SDP_dist_inv}
\begin{aligned}
\alpha_{i,j}=\left\{\begin{array}{ll}
0 \quad &\text{if } \beta_{i,j}=0,\\
{\beta_{i,j}}/{\lambda_{i,i+1}}   \quad  & \text{if } 0{<} i <N-1 {\text{ and } 0\leq j<i}, \\
{\beta_{N-1,j}}/{ \lambda_{N-1,\star}}   & \text{if } i=N-1  {\text{ and } 0\leq j<i},\\
\beta_{N,j} &\text{if } i=N  {\text{ and } 0\leq j<i}.
\end{array}\right.
\end{aligned}    
\end{equation}
The output of the corresponding method of the form~\eqref{eq:method_reparam} satisfies
\[ \|w_N-w_\star\|^2\leq \tau \|w_0-w_\star\|^2\]
    for any $d\in\mathbb{N}$, $w_0\in\mathbb{R}^d$, and $f\in\mathcal{F}_{\mu,L}(\mathbb{R}^d)$.
\end{itemize}
\end{lemma}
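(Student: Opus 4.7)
The plan is to exploit the structure of the PSD constraint in~\eqref{eq:minibound}, together with the linear equalities~\eqref{eq:dual_lin_cons}, to show that vanishing multipliers force the bilinear surrogates $\beta_{i,j}$ to vanish; then I would invert the linearization~\eqref{eq:design_SDP_dist} and invoke Lemma~\ref{lem:ub_sdp}.

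For parts (i) and (ii), the key observation is that telescoping~\eqref{eq:dual_lin_cons} gives $\lambda_{i,i+1}=\sum_{j=0}^{i}\lambda_{\star,j}$ for $0\le i\le N-2$ and $\lambda_{N-1,\star}=\sum_{j=0}^{N-1}\lambda_{\star,j}$. Consequently, whenever $\lambda_{i,i+1}$ (resp.\ $\lambda_{N-1,\star}$) vanishes, non-negativity of the multipliers forces $\lambda_{\star,j}=0$ and $\lambda_{j,j+1}=0$ for all $j\le i$ (resp.\ all $\lambda_{\star,j}$, all $\lambda_{j,j+1}$, and $\lambda_{N-2,N-1}$ equal to zero). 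Inspecting the formula for $S''$, every quadratic-in-$\bgg$ contribution coming from $\lambda$-terms at these indices then disappears, killing the diagonal entries of $S''$ at positions $\bgg_0,\ldots,\bgg_i$ (resp.\ $\bgg_0,\ldots,\bgg_{N-1}$). A PSD matrix with a zero diagonal entry has its entire corresponding row and column equal to zero. Reading off the remaining off-diagonal entries, the $(\bgg_{i'},\bgg_{i'-1})$ coefficient in $S''$ simplifies to $\tfrac{1}{2L}\beta_{i',i'-1}$ and hence yields $\beta_{i',i'-1}=0$, while for $0\le j'<i'-1$ the $(\bgg_{i'},\bgg_{j'})$ coefficient reduces to $\tfrac{1}{2L}(\beta_{i',j'}-\beta_{i'-1,j'})$ and hence gives $\beta_{i',j'}=\beta_{i'-1,j'}$. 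A short induction on the distance $i'-j'$, with the diagonal case as base, then cascades to $\beta_{i',j'}=0$ for all admissible pairs, establishing (i) and (ii).

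For part (iii), once (i) and (ii) hold, the inverse substitution~\eqref{eq:design_SDP_dist_inv} is well defined: the $\alpha_{i,j}$ are declared to be $0$ precisely in the ``$0/0$'' situations, and those are the only ones that arise. In particular, $\beta_{i,j}=\lambda_{i,i+1}\alpha_{i,j}$ (and analogously for $i=N-1$ and $i=N$) holds unconditionally, so the linearization~\eqref{eq:design_SDP_dist} is exactly undone. The algebra of Appendix~\ref{s:algebra} then shows that with this substitution $S''=S(\tau,\{\lambda_{i,j}\},\{\alpha_{i,j}\})+\bxx_N\bxx_N^\top$, so by Schur complement the block-PSD constraint in~\eqref{eq:minibound} is equivalent to $S(\tau,\{\lambda_{i,j}\},\{\alpha_{i,j}\})\succeq 0$. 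Combined with the unchanged linear equality in $\{\lambda_{i,j}\}$, this shows that $(\tau,\{\lambda_{i,j}\},\{\alpha_{i,j}\})$ is feasible for~\eqref{eq:dual_relax_sdp}, hence $\mathrm{UB}_{\mu,L}(\{\alpha_{i,j}\})\le\tau$, and Lemma~\ref{lem:ub_sdp} then delivers $\|w_N-w_\star\|^2\le\tau\,\|w_0-w_\star\|^2$.

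The main obstacle is the bookkeeping in parts (i) and (ii): one has to isolate precisely which monomials $\bgg_{i'}\bgg_{j'}^\top$ contribute to each entry of $S''$ and check that the cancellations along the $\bxx_0$-cross entries become automatic once the relevant $\lambda$'s vanish, so that the inductive cascade along the distance $i'-j'$ closes cleanly. After that structural accounting, the Schur-complement identification in (iii) and the invocation of Lemma~\ref{lem:ub_sdp} are essentially mechanical.
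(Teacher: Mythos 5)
Your proposal is correct and follows essentially the same route as the paper's proof: telescoping the equality constraints~\eqref{eq:dual_lin_cons} with nonnegativity to annihilate the relevant multipliers, using the vanishing diagonal entries of $S''$ and positive semidefiniteness to zero out the corresponding rows and columns, and then inverting the linearization and invoking weak duality via Lemma~\ref{lem:ub_sdp} (through the Schur complement) for part (iii). The only difference is that you spell out the inductive cascade on the off-diagonal $\bgg_{i'}\bgg_{j'}^\top$ entries that the paper's argument leaves implicit, which is a faithful completion of the same idea rather than a new approach.
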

\begin{proof}
(i) assume $\lambda_{k,k+1}=0$ for some $k>0$; it follows from $\lambda_{i,j}\geq 0$ and~\eqref{eq:dual_lin_cons} that $\lambda_{\star,i}=0$ {(for all $0\leq i\leq k$)} and $\lambda_{i-1,i}=0$ {(for all $1\leq i\leq k$)}. From the expression of $S''$, it means that there are no diagonal entries corresponding to the entries $e_2 e_2^\top$, ..., $e_{k+2}e_{k+2}^\top$ (corresponding to $\bgg_0\bgg_0^\top$, ..., $\bgg_k\bgg_k^\top$). Therefore, the constraint $S''\succeq 0$ imposes the corresponding off-diagonal elements to be equal to zero as well (i.e., all entries corresponding to $\bxx_0\bgg_0^\top,\hdots,\bxx_0\bgg_k^\top$ and $\bgg_i\bgg_j^\top$ for $j=0,\hdots,k$ and $i=0,\hdots,N-1$). It follows {from a short recurrence argument} that {$\beta_{i,0},\hdots,\beta_{i,i-1}=0$ for all $1\leq i\leq k$ and hence in particular that $\beta_{k,0},\hdots,\beta_{k,k-1}=0$}.

(ii) Using a similar argument: it follows from $\lambda_{N-1,\star}=0$ that $\lambda_{N-2,N-1}=\lambda_{\star,N-1}=0$. There is therefore no diagonal element corresponding to the entry $\bgg_{N-1}\bgg_{N-1}^\top$ in $S''$, and the corresponding off-diagonal elements should be zero as well due to the constraint $S''\succeq 0$. {Hence, together with (i), the last argument allows to conclude that $\beta_{N-1,0},\hdots,\beta_{N-1,N-2}=0$}.

(iii) Using (i) and (ii), and for $\{\alpha_{i,j}\}$ {(which is well defined due to (i) and (ii))}, the couple $(\tau,\{\lambda_{i,j}\})$ is feasible for~\eqref{eq:dual_relax_sdp} by construction, following the reformulation steps of $S$ in Appendix~\ref{s:algebra}. It follows that $\mathrm{UB}_{\mu,L}(\{\alpha_{i,j}\})\leq \tau$ and Lemma~\ref{lem:ub_sdp} allows reaching the desired claim.
\qed
\end{proof}
It is relatively straightforward to establish that \shortMethodName{} is a solution to~\eqref{eq:design_SDP_dist}, given that~(i) \shortMethodName{} achieves the lower complexity bound (see Section~\ref{s:LB}), that~(ii) \shortMethodName{} is a fixed-step first-order method, and that~(iii) all the inequalities involved in the proof of Theorem~\ref{thm:potential} and Theorem~\ref{corr:finalbound} are used in the relaxation procedure~\eqref{eq:relaxed}.  We conclude this section by the corresponding formal statement. 
\begin{theorem}\label{thm:item_is_a_sol} Let $N\in\mathbb{N}$, and $0\leq \mu<L<\infty$. Algorithm~\eqref{eq:OGM_sc} is a solution to~\eqref{eq:minibound}.
\end{theorem}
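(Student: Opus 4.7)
The strategy is to sandwich the optimal value of \eqref{eq:minibound} between the lower complexity bound of Section~\ref{s:LB} and an explicit feasible point built from ITEM itself. Since \eqref{eq:minibound} is an upper-bound relaxation, we have for any fixed-step method $M$ with parameters $\{\alpha_{i,j}\}$ that $W_{\mu,L}(M)\leq \mathrm{UB}_{\mu,L}(\{\alpha_{i,j}\})$ (Lemma~\ref{lem:ub_sdp}). Taking the minimum over $\{\alpha_{i,j}\}$ on both sides and combining with the lower bound from~\citep{drori2021exact} discussed in Section~\ref{s:LB} gives
\[\mathrm{val}\text{\eqref{eq:minibound}}\;=\;\min_{\{\alpha_{i,j}\}}\mathrm{UB}_{\mu,L}(\{\alpha_{i,j}\})\;\geq\;\min_{M\in\mathcal{M}_N}W_{\mu,L}(M)\;\geq\;\frac{1}{1+\cond A_N}.\]
It therefore suffices to exhibit a feasible triple $(\tau,\{\lambda_{i,j}\},\{\beta_{i,j}\})$ for \eqref{eq:minibound} with $\tau=1/(1+\cond A_N)$ and whose induced $\{\alpha_{i,j}\}$ (via \eqref{eq:design_SDP_dist_inv}) coincides with ITEM after the conversion of Lemma~\ref{lem:eq_param}.

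The construction of the multipliers is read off the proof of Lemma~\ref{thm:potential}. That proof combines one interpolation inequality between $x_\star$ and $y_k$ (weight $\lambda_1^{(k)}=(1-\cond)(A_{k+1}-A_k)$) and one between $y_{k-1}$ and $y_k$ (weight $\lambda_2^{(k)}=(1-\cond)A_k$) to certify $\phi_{k+1}\leq \phi_k$. Telescoping these $N$ certificates, rescaling by $1/\phi_0=1/(L\|z_0-x_\star\|^2)$ and collecting terms by pair of indices, yields exactly a weighted combination of the inequality types retained in \eqref{eq:relaxed}: pairs $(w_{i+1},w_i)$ (giving $\lambda_{i,i+1}$), pairs $(w_\star, w_i)$ (giving $\lambda_{\star,i}$), and the terminal pair (giving $\lambda_{N-1,\star}$). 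The plan is to read these $\lambda_{i,j}$ from the telescoped identity, then set $\alpha_{i,j}$ to the ITEM coefficients converted via \eqref{eq:equiv_param}, and finally define $\beta_{i,j}$ via \eqref{eq:design_SDP_dist}. The linear constraint on the $\lambda_{i,j}$'s in \eqref{eq:minibound} is then the dual-feasibility condition of the $F$-variables, which follows automatically since in the potential argument the objective $F$-terms cancel after telescoping.

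The main obstacle is verifying the semidefinite constraint $\begin{pmatrix} S'' & \bxx_N\\ \bxx_N^\top & 1\end{pmatrix}\succeq 0$ with $\tau=1/(1+\cond A_N)$. By a Schur-complement, this is equivalent to $S''(\tau,\{\lambda_{i,j}\},\{\beta_{i,j}\})\succeq \bxx_N \bxx_N^\top$, which is precisely the statement that the weighted sum of the selected interpolation inequalities dominates the quadratic form $\|w_N-w_\star\|^2-\tau\|w_0-w_\star\|^2$. But this is exactly what the (equivalent) algebraic identity displayed in the proof of Lemma~\ref{thm:potential} asserts: the difference was rewritten there as $-LK_1 P(A_{k+1},A_k)\|z_k-x_\star\|^2+\tfrac{1}{4L}K_2 P(A_{k+1},A_k)\|\cdots\|^2$, which vanishes because $P(A_{k+1},A_k)=0$ by construction of $A_{k+1}$, leaving the telescoped inequality $(L+\mu A_N)\|z_N-x_\star\|^2\leq L\|z_0-x_\star\|^2$. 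Hence the SOS certificate implicit in Lemma~\ref{thm:potential} is precisely the certificate of PSDness of $S''$ required here.

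Granting this, the constructed $(\tau,\lambda,\beta)$ is feasible with $\tau=1/(1+\cond A_N)$, matching the lower bound; Lemma~\ref{lem:final_construction}(iii) then confirms that recovering $\{\alpha_{i,j}\}$ from $\{\beta_{i,j}\}$ reproduces ITEM, completing the proof. The only non-mechanical step is checking that no $\lambda_{i,i+1}$ or $\lambda_{N-1,\star}$ appearing as a denominator in \eqref{eq:design_SDP_dist_inv} vanishes when the corresponding $\alpha_{i,j}$'s are nonzero, which follows from $A_k>0$ for $k\geq 1$ and from the explicit formulas for $\lambda_1^{(k)},\lambda_2^{(k)}$ above.
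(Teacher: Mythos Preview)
Your proposal is correct and follows essentially the same route as the paper's own proof in Appendix~\ref{a:minimax_solution}: sandwich the optimal value of~\eqref{eq:minibound} between the lower complexity bound of Section~\ref{s:LB} and an explicit feasible point whose multipliers are read off the potential argument of Lemma~\ref{thm:potential} (the paper writes these out explicitly as $\lambda_{\star,i}^\star=\tfrac{(1-\cond)(A_{i+1}-A_i)}{L(1+\cond A_N)}$, $\lambda_{i,i+1}^\star=\tfrac{(1-\cond)A_{i+1}}{L(1+\cond A_N)}$, $\lambda_{N-1,\star}^\star=\tfrac{(1-\cond)A_N}{L(1+\cond A_N)}$), then verifies that the resulting $S''$ equals $\bxx_N\bxx_N^\top$ exactly, using that $P(A_{k+1},A_k)=0$ makes the telescoped identity an equality. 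Two minor points: the normalization constant is $1/(L+\mu A_N)$, not ``$1/\phi_0=1/(L\|z_0-x_\star\|^2)$'' (a variable quantity); and Lemma~\ref{lem:final_construction}(iii) only certifies the bound for the recovered method---that the recovered $\{\alpha_{i,j}\}$ are ITEM's is immediate from your own construction (you defined $\beta_{i,j}$ from ITEM's $\alpha_{i,j}$ and the nonzero $\lambda$'s), not from that lemma.
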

\begin{proof}
We exhibit a solution to~\eqref{eq:minibound} and show that it corresponds to \shortMethodName{} in Appendix~\ref{a:minimax_solution}.\qed
\end{proof}

Numerical examples of the design procedure are provided in Appendix~\ref{a:numerics}, including optimized methods for different objectives, like function values, for which we provide the slightly adapted design strategy in Appendix~\ref{a:func_values}. Code for reproducing the results are provided in Section~\ref{s:ccl}.

{\begin{remark}[From numerical values of the step sizes to an analytical algorithm] Before concluding, let us informally mention a few details on how we % (perhaps luckily)
obtained the analytical formulation for Algorithm~\ref{alg:item} (ITEM) from the numerical values of the step sizes. %First, it was a priori not obvious for us that we had to optimize the method for the criterion $\frac{\|w_N-w_\star\|^2}{\|w_0-w_\star\|^2}$, and we actually started with other criterion, such as $\frac{f(w_N)-f_\star}{\|w_0-w_\star\|^2}$ and $\frac{f(w_N)-f_\star}{f(w_0)-f_\star}$ (see Appendix~\ref{a:numerics}). The choice of the criterion $\frac{\|w_N-w_\star\|^2}{\|w_0-w_\star\|^2}$ became clear because we were able to numerically reach matching upper and lower complexity bounds (see~\citep{drori2021exact} for the lower bound parts) for many values of $N$, $L$, and $\mu$. 

%Concerning the choice of the criterion and the search for the analytical expression of the method, the main hints were as follows: (i) the step size policy appeared not to depend on the horizon $N$ (note that this is not the case when optimizing the alternate worst-case criterion $\frac{f(w_N)-f_\star}{\|w_0-w_\star\|^2}$ or $\frac{f(w_N)-f_\star}{f(w_0)-f_\star}$, see Appendix~\ref{a:numerics}), (ii) the numerics suggested that the optimal values of the step sizes could be ``factored'' in an efficient form not requiring to store all previous gradients (see, e.g., discussions in~\citep[Section 5]{drori2020efficient}), and (iii) because only a few ``structured'' inequalities were required in the proof, it also became clear that there should exist a potential/Lyapunov-based proof.

The main observations that helped are the following: (i) the step size policy appeared not to depend on the horizon $N$, (ii) the numerics suggested that the optimal values of the step sizes could be ``factored'' in an efficient form not requiring to store all previous gradients (see, e.g., discussions in~\citep[Section 5]{drori2020efficient}), and (iii) only a few inequalities were active at the optimal point. Those three observations suggested the existence of a potential/Lyapunov-based proof structure for the optimized method. Perhaps luckily, this allowed us to engineer a method matching the coefficients and worst-case bounds obtained from the numerical step size optimization procedure, as well as the results from the analytical lower complexity bound (see~\citep[Corollary 4]{drori2021exact}).
%For those reasons, the values of the coefficients and the shape of the method were reverse-engineered for enforcing the existence of a potential-based proof, as in Lemma~\ref{thm:potential}. It allowed us finding analytical candidates for all coefficients. Perhaps luckily, the analytical results appeared to match the coefficients and worst-case bounds obtained from the numerical step size optimization procedure, as well as the results from the analytical lower complexity bound (see~\citep[Corollary 4]{drori2021exact}), thereby validating the results.
\end{remark}}
\section{Conclusion}\label{s:ccl}
In this work, we provided the \emph{\fullMethodName{ }} (\shortMethodName), a first-order method whose worst-case guarantee exactly matches the lower bound for minimizing smooth strongly convex functions. Furthermore, we showed how to develop such methods {constructively}, through \emph{performance estimation problems} and semidefinite programming.

We believe that obtaining accelerated first-order methods as solutions to minimax problems certainly brings perspectives and a systematic approach to accelerated methods in first-order convex optimization, similar to the design procedure for obtaining Chebyshev methods for quadratic minimization (see e.g., the survey of~\citep{dAspremontAccelerated}). In addition, we think that the conceptual simplicity of the shapes and proofs of such optimized methods render them attractive as textbook examples for illustrating the acceleration phenomenon. In particular, it appeared as very surprising to us that both sequences $\{y_k\}$ and $\{z_k\}$ now have relatively clear interpretations: $z_k$'s are optimal for optimizing the distance to an optimal solution, whereas $y_k$'s are essentially optimal for optimizing function values (see~\citep{kim2015optimized}). 

Those methods might as well serve as an inspiration for further developments on this topic, for designing accelerated methods in other settings, and for alternate performance criterion, as showcased numerically in Appendix~\ref{a:numerics}.

Finally, let us mention that extending methods such as the Optimized Gradient Method, the Triple Momentum Method, and the \fullMethodName{ }to more general situations, possibly involving constraints, for instance, seems less straightforward compared to other acceleration schemes. We leave this question for future works.

\paragraph{Software} Source code for helping in reproducing the slightly algebraic passage in~\S\ref{s:OGM_sc} can be found in
\begin{center}
\url{https://github.com/AdrienTaylor/Optimal-Gradient-Method}    
\end{center}
together with implementations in the Performance Estimation Toolbox~\citep{taylor2017performance} for validating the potential from Lemma~\ref{thm:potential}, bounds from Theorem~\ref{corr:finalbound}, and the constructive procedure of Lemma~\ref{lem:final_construction}.

\paragraph{Acknowledgments} {The authors would like to thank \href{https://shuvomoy.github.io/site/}{Shuvomoy Das Gupta} for pointing out a few typos and for nice suggestions for improvements. We are also very grateful to two anonymous referees and an associate editor of Mathematical Programming for providing very constructive feedbacks on an early version of the manuscript.}

\bibliographystyle{plainnat}
\bibliography{bib}{}   % name your BibTeX data base

\appendix
\section{Alternate parametrization for first-order methods}\label{s:reparam}
In this section, we show that any method~\eqref{eq:method} can be reparametrized as~\eqref{eq:method_reparam} (and vice-versa), using the identity~\eqref{eq:equiv_param}. First note that the equivalence is clear for $k=1$, as
\begin{equation*}
\begin{aligned}
w_1-x_\star&=w_0-x_\star-\tfrac{h_{1,0}}{L}(\nabla\tilde f(w_0)+\mu (w_0-x_\star))\\
&=(w_0-x_\star)(1-\tfrac{\mu}{L}h_{1,0})-\tfrac{h_{1,0}}{L}\nabla\tilde f(w_0),
\end{aligned}
\end{equation*}
and hence the equivalence holds for $k=1$. Now, assuming the equivalence holds at iteration $k$, let us check that it holds at iteration $k+1$, that is assume $w_i-x_\star=(w_0-x_\star)\left(1-\tfrac{\mu}{L}\sum_{j=0}^{i-1}\alpha_{i,j}\right)-\tfrac{1}{L}\sum_{j=0}^{i-1}\alpha_{i,j}\nabla\tilde f(w_j)$ for $0\leq i\leq k$ and compute
\begin{equation*}
\begin{aligned}
w_{k+1}-x_\star=&w_k-x_\star-\tfrac{1}{L}\sum_{i=0}^k h_{k+1,i}(\nabla \tilde f(w_i)+\mu(w_i-x_\star))\\
=&(w_0-x_\star)\left(1-\tfrac{\mu}{L}\sum_{i=0}^{k-1}\alpha_{k,i}-\tfrac{\mu}{L}\sum_{i=0}^k h_{k+1,i}+\tfrac{\mu^2}{L^2}\sum_{i=0}^k \sum_{j=0}^{k-1} h_{k+1,i}\alpha_{i,j}\right)\\
&-\tfrac{1}{L}\sum_{i=0}^{k-1}\alpha_{k,i}\nabla\tilde f(w_i)-\tfrac{1}{L}\sum_{i=0}^k h_{k+1,i}\nabla \tilde f(w_i)+\tfrac{\mu}{L^2}\sum_{i=0}^k \sum_{j=0}^{k-1}h_{k+1,i}\alpha_{i,j}\nabla \tilde f(w_j)
\end{aligned}
\end{equation*}
by reverting the ordering of the double sums, renaming the indices, and reordering, we get
\begin{equation*}
\begin{aligned}
w_{k+1}-x_\star=&(w_0-x_\star)\left(1-\tfrac{\mu}{L}\sum_{i=0}^{k-1}\alpha_{k,i}-\tfrac{\mu}{L}\sum_{i=0}^k h_{k+1,i}+\tfrac{\mu^2}{L^2}\sum_{j=0}^{k-1} \sum_{i=j+1}^{k} h_{k+1,i}\alpha_{i,j}\right)\\
&-\tfrac{1}{L}\sum_{i=0}^{k-1}\alpha_{k,i}\nabla\tilde f(w_i)-\tfrac{1}{L}\sum_{i=0}^k h_{k+1,i}\nabla \tilde f(w_i)+\tfrac{\mu}{L^2}\sum_{j=0}^{k-1} \sum_{i=j+1}^{k}h_{k+1,i}\alpha_{i,j}\nabla \tilde f(w_j)\\
=&(w_0-x_\star)\left(1-\tfrac{\mu}{L}\sum_{i=0}^{k-1}\alpha_{k,i}-\tfrac{\mu}{L}\sum_{i=0}^k h_{k+1,i}+\tfrac{\mu^2}{L^2}\sum_{i=0}^{k-1} \sum_{j=i+1}^{k} h_{k+1,j}\alpha_{j,i}\right)\\
&-\tfrac{1}{L}\sum_{i=0}^{k-1}\alpha_{k,i}\nabla\tilde f(w_i)-\tfrac{1}{L}\sum_{i=0}^k h_{k+1,i}\nabla \tilde f(w_i)+\tfrac{\mu}{L^2}\sum_{i=0}^{k-1} \sum_{j=i+1}^{k}h_{k+1,j}\alpha_{j,i}\nabla \tilde f(w_i)\\
=&(w_0-x_\star)\left(1-\tfrac{\mu}{L}h_{k+1,k}-\tfrac{\mu}{L}\sum_{i=0}^{k-1}\left(\alpha_{k,i}+ h_{k+1,i}-\tfrac{\mu}{L}\sum_{j=i+1}^{k} h_{k+1,j}\alpha_{j,i}\right)\right)\\
&-\tfrac1L h_{k+1,k}\nabla \tilde f(w_k) -\tfrac{1}{L}\sum_{i=0}^{k-1}\left(\alpha_{k,i}+h_{k+1,i}-\tfrac{\mu}{L}\sum_{j=i+1}^{k}h_{k+1,j}\alpha_{j,i}\right)\nabla \tilde f(w_i).
\end{aligned}
\end{equation*}
From this last reformulation, the choice~\eqref{eq:equiv_param}, that is
\begin{equation*}
\begin{aligned}
\alpha_{k+1,i}=\left\{\begin{array}{ll}
h_{k+1,k}     & \text{if } i=k \\
h_{k+1,i}+\alpha_{k,i}-\tfrac{\mu}{L}\sum_{j=i+1}^k h_{k+1,j}\alpha_{j,i}\quad    & \text{if } 0\leq i <k,
\end{array}\right.
\end{aligned}
\end{equation*}
allows enforcing the coefficients of all independent terms $(w_0-x_\star)$, $\nabla \tilde f(w_0),\hdots,\nabla \tilde f(w_k)$ to be equal in both~\eqref{eq:method} and~\eqref{eq:method_reparam}, reaching the desired statement. In addition, note that this change of variable is {reversible}.

\section{Algebraic manipulations of~\eqref{eq:dual_relax_sdp}}\label{s:algebra}
In this section, we reformulate~\eqref{eq:dual_relax_sdp} for enabling us optimizing both on $\alpha_{i,j}$'s and $\lambda_{i,j}$'s simultaneously. For doing that, let us start by conveniently noting that
\[ S(\tau,\{\lambda_{i,j},\{\alpha_{i,j}\}\})\succeq 0 \Leftrightarrow \begin{pmatrix}S'(\tau,\{\lambda_{i,j}\},\{\alpha_{i,j}\}) & \bxx_N \\ \bxx_N^\top & 1\end{pmatrix}\succeq 0, \]
with $S'(\tau,\{\lambda_{i,j}\},\{\alpha_{i,j}\})=S(\tau,\{\lambda_{i,j}\},\{\alpha_{i,j}\})+\bxx_N\bxx_N^\top$, using a standard Schur complement (see, e.g.,~\citep{van1983matrix}). The motivation underlying this reformulation is that this \emph{lifted} linear matrix inequality depends linearly on $\{\alpha_{N,i}\}_i$'s. Indeed, the coefficients of the last iteration only appear through the term $\bxx_N$, which is not present in $S'$ (details below).

We only consider the case $N>1$ below. In the case $N=1$,{~\eqref{eq:blackbox_optim}} can be solved without the following simplifications. Let us develop the expression of $S'({\cdot})$ as follows
\begin{equation*}
\begin{aligned}
S'(\tau,\{\lambda_{i,j}\},\{\alpha_{i,j}\})=\,&\tau \bxx_0\bxx_0^\top+\tfrac{1}{2(L-\mu)}\left(\lambda_{N-1,\star}\,\bgg_{N-1}\bgg_{N-1}^\top+\sum_{i=0}^{N-1}\lambda_{\star,i}\bgg_i\bgg_i^\top+\sum_{i=0}^{N-2}\lambda_{i,i+1}(\bgg_i-\bgg_{i+1})(\bgg_i-\bgg_{i+1})^\top\right)\\
&-\tfrac12\lambda_{\star,0}(\bgg_0\bxx_0^\top+\bxx_0\bgg_0^\top)-\tfrac12\sum_{i=1}^{N-1}(\lambda_{i-1,i}+\lambda_{\star,i})(\bgg_i\bxx_i^\top+\bxx_i\bgg_i^\top)\\&+\tfrac12\sum_{i=0}^{N-2}\lambda_{i,i+1}(\bgg_{i+1}\bxx_i^\top+\bxx_i\bgg_{i+1}^\top)\\
=\,&\tau \bxx_0\bxx_0^\top+\tfrac{1}{2(L-\mu)}\left(\lambda_{N-1,\star}\,\bgg_{N-1}\bgg_{N-1}^\top+\sum_{i=0}^{N-1}\lambda_{\star,i}\bgg_i\bgg_i^\top+\sum_{i=0}^{N-2}\lambda_{i,i+1}(\bgg_i-\bgg_{i+1})(\bgg_i-\bgg_{i+1})^\top\right)\\
&-\tfrac12\lambda_{\star,0}(\bgg_0\bxx_0^\top+\bxx_0\bgg_0^\top)-\tfrac12\sum_{i=1}^{N-2}\lambda_{i,i+1}(\bgg_i\bxx_i^\top+\bxx_i\bgg_i^\top)\\&-\tfrac12\lambda_{N-1,\star}(\bgg_{N-1}\bxx_{N-1}^\top+\bxx_{N-1}\bgg_{N-1}^\top)+\tfrac12\sum_{i=0}^{N-2}\lambda_{i,i+1}(\bgg_{i+1}\bxx_i^\top+\bxx_i\bgg_{i+1}^\top),
\end{aligned}
\end{equation*}
where we used $\lambda_{i-1,i}+\lambda_{\star,i}=\lambda_{i,i+1}$ (for $i=1,\hdots,N-2$), and $\lambda_{N-2,N-1}+\lambda_{\star,N-1}=\lambda_{N-1,\star}$ for obtaining the second equality. Substituting the expressions for $\bxx_i$'s, we arrive to
\begin{equation*}
\begin{aligned}
S'(\tau,\{\lambda_{i,j}\},\{\alpha_{i,j}\})=\,&\tau \bxx_0\bxx_0^\top+\tfrac{1}{2(L-\mu)}\left(\lambda_{N-1,\star}\,\bgg_{N-1}\bgg_{N-1}^\top+\sum_{i=0}^{N-1}\lambda_{\star,i}\bgg_i\bgg_i^\top+\sum_{i=0}^{N-2}\lambda_{i,i+1}(\bgg_i-\bgg_{i+1})(\bgg_i-\bgg_{i+1})^\top\right)\\
&-\tfrac12\lambda_{\star,0}(\bgg_0\bxx_0^\top+\bxx_0\bgg_0^\top)-\sum_{i=1}^{N-2}\lambda_{i,i+1}\left(1-\tfrac{\mu}{L}\sum_{j=0}^{i-1}\alpha_{i,j}\right) \,\tfrac12 (\bgg_i\bxx_0^\top+\bxx_0\bgg_i^\top)\\&+\sum_{i=1}^{N-2}\lambda_{i,i+1}\sum_{j=0}^{i-1}\tfrac{\alpha_{i,j}}{L}\,\tfrac12(\bgg_i\bgg_j^\top+\bgg_j\bgg_i^\top)-\lambda_{N-1,\star}\left(1-\tfrac{\mu}{L}\sum_{j=0}^{N-2}\alpha_{N-1,j}\right)\,\tfrac12(\bgg_{N-1}\bxx_0^\top+\bxx_0\bgg_{N-1}^\top)\\&+\lambda_{N-1,\star}\sum_{j=0}^{N-2}\tfrac{\alpha_{N-1,j}}{L}\,\tfrac12(\bgg_{N-1}\bgg_j^\top+\bgg_j\bgg_{N-1}^\top)+\sum_{i=0}^{N-2}\lambda_{i,i+1}\left(1-\tfrac{\mu}{L}\sum_{j=0}^{i-1}\alpha_{i,j}\right)\,\tfrac12(\bgg_{i+1}\bxx_0^\top+\bxx_0\bgg_{i+1}^\top)\\
&-\sum_{i=0}^{N-2}\lambda_{i,i+1}\sum_{j=0}^{i-1}\tfrac{\alpha_{i,j}}{L}\,\tfrac12(\bgg_{i+1}\bgg_j^\top+\bgg_j\bgg_{i+1}^\top),
\end{aligned}
\end{equation*}
where we simply expressed each $\bxx_k$ in two terms, one with the contribution of $\bxx_0$, and the other with the contributions of $\bgg_i$'s. Although not pretty, one can observe that $S'$ is still bilinear in $\{\alpha_{i,j}\}$ and $\{\lambda_{i,j}\}$. This expression can be largely simplified, but this form suffices for the purposes in this work. 
\section{The \fullMethodName{} is a solution to~\eqref{eq:minibound}}\label{a:minimax_solution}

\paragraph{Proof of Theorem~\ref{thm:item_is_a_sol}} For readability purposes, we establish the claim without explicitly computing the optimal values of the variables $\{\alpha_{i,j}\}$ and $\{\beta_{i,j}\}$ for \shortMethodName{}. For avoiding this step, let us note that \shortMethodName{} is clearly a fixed-step first-order method following Definition \ref{def:FS}. Therefore, following Lemma~\ref{lem:eq_param}, the method can also be written in the alternate parametrization~\eqref{eq:method_reparam}, using the association $w_i\leftarrow y_i$ ($i=0,1,\hdots,N-1$) and $w_N\leftarrow z_N$, where $y_k$ and $z_k$ are the sequences defined by~\eqref{eq:OGM_sc}.
Let $\{\alpha_{i,j}^\star\}$ denote the steps sizes corresponding to ITEM written in the form~\eqref{eq:method_reparam}. We proceed to show that by choosing
\begin{equation}\label{eq:def_item_lambdas}
\begin{aligned}
{\lambda_{\star,i}^\star}&=\frac{1-\cond}{L}\frac{A_{i+1}-A_i}{1+\cond A_N}\\
{\lambda_{i,i+1}^\star}&=\frac{1-\cond}{L}\frac{A_{i+1}}{1+\cond A_N}\\
{\lambda_{N-1,\star}^\star}&=\frac{1-\cond}{L}\frac{A_N}{1+\cond A_N}\\
{\tau^\star} &= \frac{1}{1+\cond A_N}
\end{aligned}
\end{equation}
and setting $\{\beta_{i,j}^{\star}\}$ in accordance to \eqref{eq:design_SDP_dist}, we reach a feasible solution to \eqref{eq:minibound}.
% is feasible for the choice of \modAT{$\{\beta_{i,j}^{\star}\}$} \modAT{which corresponds to \shortMethodName{}}.
Note that optimality of the solution follows from the value of $\tau^\star$, which matches the lower complexity bound discussed in Section~\ref{s:LB}.

For establishing dual feasibility, we relate~\eqref{eq:minibound} to the Lagrangian of~\eqref{eq:relaxed}. That is, denoting
\[ K=S''(\tau^\star,\{\lambda_{i,j}^\star\},\{\beta_{i,j}^{\star}\})-\bxx_N\bxx_N^\top=S(\tau^\star,\{\lambda_{i,j}^\star\},\{\alpha_{i,j}^\star\}),\]
we have for all $(F,G)$ as in~\eqref{eq:gram_representation}, by construction,
\begin{equation*}
\begin{aligned}
&F\left(\sum_{i=0}^{N-2} \lambda_{i,i+1}^\star(\bff_{i+1}-\bff_i)+\sum_{i=0}^{N-1}\lambda_{\star,i}^\star\bff_i-\lambda_{N-1,\star}^\star\bff_{N-1}\right)+\mathrm{Tr}(KG)\\
&\,=\tau^\star \|w_0-w_\star\|^2-\|w_N-w_\star\|^2\\
&\quad +\sum_{i=0}^{N-2}\lambda_{i,i+1}^\star\left[ \tilde f(w_{i+1})- \tilde f(w_i)+\langle \nabla \tilde f(w_{i+1});w_i-w_{i+1}\rangle +\tfrac1{2(L-\mu)}\|\nabla \tilde f (w_i)-\nabla \tilde f(w_{i+1})\|^2\right]\\&\quad+\sum_{i=0}^{N-1}{\lambda_{\star,i}^\star}\left[\tilde f(w_i)-\tilde f_\star+\langle \nabla \tilde f(w_i),w_\star-w_{i}\rangle +\tfrac1{2(L-\mu)}\|\nabla \tilde f(w_{i})\|^2\right]\\&\quad+{\lambda_{N-1,\star}^\star}\left[\tilde f_\star-\tilde f(w_{N-1})+\tfrac1{2(L-\mu)}\|\nabla \tilde f(w_{N-1})\|^2\right].
\end{aligned}
\end{equation*}
Using the association $w_i\leftarrow y_i$ ($i=0,1,\hdots,N-1$) and $w_N\leftarrow z_N$, as well as $f(y_i)=\tilde{f}(y_i)+\tfrac{\mu}{2}\|y_i-x_\star\|^2$, it follows from Lemma~\ref{thm:potential} {(the weighted sums below are the same as that of Lemma~\ref{thm:potential}, written in terms of $\tilde{f}(\cdot)$ instead of $f(\cdot)$ and scaled by a factor $\frac{1}{L+\mu A_N}$; their reformulations are therefore also the same up to the rescaling)} that for $i=1,\hdots,N-1$
\begin{equation*}
\begin{aligned}
&{\lambda_{\star,i}^\star}\left[\tilde f(w_i)-\tilde f_\star+\langle \nabla \tilde f(w_i),w_\star-w_{i}\rangle +\tfrac1{2(L-\mu)}\|\nabla \tilde f(w_{i})\|^2\right]\\&\quad+{\lambda_{i-1,i}^\star}\left[ \tilde f(w_{i})- \tilde f(w_{i-1})+\langle \nabla \tilde f(w_{i});w_{i-1}-w_{i}\rangle +\tfrac1{2(L-\mu)}\|\nabla \tilde f (w_i)-\nabla \tilde f(w_{i-1})\|^2\right]=\frac1{L+\mu A_N}(\phi_{i+1}-\phi_{i}),
\end{aligned}
\end{equation*}
as well as
\begin{equation*}
\begin{aligned}
&{\lambda_{\star,0}^\star}\left[\tilde f(w_0)-\tilde f_\star+\langle\nabla \tilde f(w_0),w_\star-w_{0}\rangle +\tfrac1{2(L-\mu)}\|\nabla \tilde f (w_{0})\|^2\right]=\frac1{L+\mu A_N}(\phi_1-\phi_0),\\
&{\lambda_{N-1,\star}^\star}\left[\tilde f_\star-\tilde f(w_{N-1})+\tfrac1{2(L-\mu)}\|\nabla \tilde f(w_{N-1})\|^2\right]=-\frac{(1-q)A_N}{L+\mu A_N}\psi_{N-1}.
\end{aligned}
\end{equation*}
In addition, noting that $\phi_0=L\|w_0-w_\star\|^2$ allows reaching the following reformulation
\begin{equation*}
\begin{aligned}
&F\left(\sum_{i=0}^{N-2} {\lambda_{i,i+1}^\star}(\bff_{i+1}-\bff_i)+\sum_{i=0}^{N-1}{\lambda_{\star,i}^\star}\bff_i-{\lambda_{N-1,\star}^\star}\bff_{N-1}\right)+\mathrm{Tr}(KG)\\&=\frac{1}{L+\mu A_N}\left(\phi_0-(1-\cond)A_N\psi_{N-1}+\sum_{i=0}^{N-1}(\phi_{i+1}-\phi_i)\right)-\|z_N-w_\star\|^2\\&=0,
\end{aligned}
\end{equation*}
where the last equality follows from $\phi_N=(1-q)A_N\psi_{N-1}+(L+\mu A_N)\|z_N-w_\star\|^2$. Therefore, \shortMethodName{} is a solution to~\eqref{eq:minibound}. In more direct terms of the SDP~\eqref{eq:minibound}, one can verify that
\begin{equation*}
\begin{aligned}
& {\lambda_{\star,0}^\star}-{\lambda_{0,1}^\star}=\frac{1-\cond}{L}\left(\frac{A_{1}}{1+\cond A_N}-\frac{A_{1}}{1+\cond A_N}\right)=0&&\\
& {\lambda_{i-1,i}^\star}+{\lambda_{\star,i}^\star}-{\lambda_{i,i+1}^\star}=\frac{1-\cond }{L}\left(\frac{A_{i}}{1+\cond A_N}+\frac{A_{i+1}-A_i}{1+\cond A_N}-\frac{A_{i+1}}{1+\cond A_N}\right)=0 \quad &&\text{for }i=1,\hdots,N-2\\
& {\lambda_{N-2,N-1}^\star}+{\lambda_{\star,N-1}^\star}-{\lambda_{N-1,\star}^\star}=\frac{1-\cond }{L}\left(\frac{A_{N-1}}{1+\cond A_N}+\frac{A_{N}-A_{N-1}}{1+\cond A_N}-\frac{A_N}{1+\cond  A_N}\right)=0, &&
\end{aligned}
\end{equation*}
the previous computations therefore imply that $K=0$ for \shortMethodName, and hence $S''(\tau,\{\lambda_{i,j}\},{\{\beta_{i,j}\}})=\bxx_N\bxx_N^\top\succeq 0$.\qed

\section{An SDP formulation for optimizing function values}\label{a:func_values}
In this section, we show how to adapt the methodology developed in Section~\ref{s:optimized_steps} for a family of alternate design criteria, which include $(f(w_N)-f_\star)/\|w_0-w_\star\|^2$ and $(f(w_N)-f_\star)/(f(w_0)-f_\star)$ (for which numerical examples are provided respectively in Section~\ref{s:func1} and  Section~\ref{s:func2}). The developments {slightly differ from those required for optimizing ${\|w_N-w_\star\|^2}/{\|w_0-w_\star\|^2}$; and we decided not to present a unified version in the core of the text, for readability purposes.} {In particular, the set of selected inequalities is slightly different, altering the linearization procedure.}

The criteria we deal with in this section are of the form \[\frac{f(w_N)-f_\star}{c_w\|w_0-w_\star\|^2+c_f(f(w_0)-f_\star)}=\frac{\tilde f(w_N)- f_\star+\tfrac{\mu}{2}\|w_N-w_\star\|^2}{c_w\|w_0-w_\star\|^2+c_f(\tilde f(w_0)-f_\star+\tfrac{\mu}{2}\|w_0-w_\star\|^2)}.\]
As the steps are essentially the same as detailed in Section~\ref{s:optimized_steps}, we proceed without providing much detail. We start with the discrete version, using the set $I=\{\star,0,\hdots,N\}$
\begin{equation*}
\begin{aligned}
\max_{\substack{\{(w_i,g_i,f_i)\}_{i\in I}\\d\in\mathbb{N}}} & f_N-f_\star+\tfrac{\mu}{2}\|w_N-w_\star\|^2&&\\
\text{s.t. }& c_w\|w_0-w_\star\|^2+c_f(f_0-f_\star+\tfrac{\mu}{2}\|w_0-w_\star\|^2)=1,\, g_\star=0&&\\
&w_{k} \text{ generated by~\eqref{eq:method_reparam}} &&\text{for }k=1,\hdots,N\\
& f_i\geq f_j+\langle g_j;w_i-w_j\rangle +\tfrac1{2(L-\mu)}\|g_i-g_j\|^2\quad &&\text{for all }i,j\in I.
\end{aligned}
\end{equation*}
The upper bound we use is now very slightly different {(the selected subset of constraints is not the same as that of~\eqref{eq:relaxed})}
\begin{equation*}
\begin{aligned}
\mathrm{UB}_{\mu,L}(\{\alpha_{i,j}\})=\max_{\substack{\{(w_i,g_i,f_i)\}_{i\in I}\\d\in\mathbb{N}}} & f_N-f_\star+\tfrac{\mu}{2}\|w_N-w_\star\|^2&&\\
\text{s.t. }& c_w\|w_0-w_\star\|^2+c_f(f_0-f_\star+\tfrac{\mu}{2}\|w_0-w_\star\|^2)=1,\, g_\star=0&&\\
&w_{k} \text{ generated by~\eqref{eq:method_reparam} } &&\text{for }k=1,\hdots,N\\
& f_i\geq f_{i+1}+\langle g_{i+1};w_i-w_{i+1}\rangle +\tfrac1{2(L-\mu)}\|g_i-g_{i+1}\|^2 &&\text{for }i=0,\hdots,N-1\\
& f_\star\geq f_i+\langle g_i,w_\star-w_{i+1}\rangle +\tfrac1{2(L-\mu)}\|g_{i+1}\|^2 &&\text{for }i=0,\hdots,N
\end{aligned}
\end{equation*}
The corresponding SDP can be written using a similar couple $(G,F)$
\begin{equation*}
\begin{aligned}
G&=\begin{pmatrix} \|w_0-w_\star\|^2 & \langle g_0;w_0-w_\star\rangle & \langle g_1;w_0-w_\star\rangle &\hdots & \langle g_{N};w_0-w_\star\rangle\\
\langle g_0;w_0-w_\star\rangle & {\|g_0\|^2} & \langle g_1;g_0\rangle & \hdots &\langle g_{N};g_0\rangle\\
\langle g_1;w_0-w_\star\rangle & \langle g_1;g_0\rangle & {\|g_1\|^2} &\hdots &\langle g_{N};g_1\rangle\\
\vdots & \vdots & \vdots &\ddots & \vdots\\
\langle g_{N};w_0-w_\star\rangle & \langle g_{N};g_0\rangle & \langle g_{N};g_1\rangle &\hdots &\|g_{N}\|^2\\
\end{pmatrix}\\
F&=\begin{pmatrix}f_0-f_\star \\ f_1-f_\star \\ \vdots \\ f_{N}-f_\star \end{pmatrix},
\end{aligned}
\end{equation*}
and the similar notations
\[ \bxx_0=e_1\in\mathbb{R}^{N+2},\quad \bgg_i=e_{i+2}\in\mathbb{R}^{N+2},\quad \bff_i=e_{i+1}\in\mathbb{R}^{N+1},\]
with $i=0,\hdots,N$ and $e_i$ being the unit vector whose $i$th component is equal to $1$. In addition, we can also denote by
\[\bxx_k=\bxx_0\left(1-\tfrac{\mu}{L}\sum_{i=0}^{k-1}\alpha_{k,i}\right)-\sum_{i=0}^{k-1}\tfrac{\alpha_{k,i}}{L}\bgg_i.\]
A dual formulation of $\mathrm{UB}_{\mu,L}$ is given by (we directly included the Schur complement)
\begin{equation*}
\begin{aligned}
\mathrm{UB}_{\mu,L}(\{\alpha_{i,j}\})=\min_{\tau,\lambda_{i,j}\geq0}& \tau,&&\\
\text{s.t.}&\,\begin{pmatrix}\varS'(\tau,\{\lambda_{i,j}\},\{{\alpha_{i,j}}\}) & \sqrt{\mu}\bxx_N \\ \sqrt{\mu}\bxx_N^\top & 2\end{pmatrix}\succeq 0,&&\\
&\tau \, c_f\, \bff_{0}+\sum_{i=0}^{N-1} \lambda_{i,i+1}(\bff_{i+1}-\bff_i)+\sum_{i=0}^{N}\lambda_{\star,i}\bff_i=\bff_{N}
\end{aligned}
\end{equation*}
with \begin{equation*}
\begin{aligned}
\varS'(\tau,\{\lambda_{i,j}\},\{{\alpha_{i,j}}\})=&\,\tau\, (c_w+c_f\tfrac{\mu}{2}) \bxx_0\bxx_0^\top+\sum_{i=0}^{N} \frac{\lambda_{\star,i}}{2}\left(-\bgg_i \bxx_i^\top -\bxx_i\bgg_i^\top+\tfrac{1}{L-\mu}\bgg_i\bgg_i^\top\right)\\
&+\sum_{i=0}^{N-1}\frac{\lambda_{i,i+1}}{2}\left(\bgg_{i+1}(\bxx_i-\bxx_{i+1})^\top+(\bxx_i-\bxx_{i+1})\bgg_{i+1}^\top+\tfrac{1}{L-\mu}(\bgg_i-\bgg_{i+1})(\bgg_i-\bgg_{i+1})^\top\right).
\end{aligned}
\end{equation*} Note that that the equality constraint corresponds to
\begin{equation*}
\begin{aligned}
& c_f+\lambda_{\star,0}-\lambda_{0,1}=0&&\\
& \lambda_{i-1,i}+\lambda_{\star,i}-\lambda_{i,i+1}=0 \quad &&\text{for }i=1,\hdots,N-1\\
& \lambda_{N-1,N}+\lambda_{\star,N}=1. &&
\end{aligned}
\end{equation*}
We perform a some additional work on $\varS'$ {(whose dependency on $\{\alpha_{i,j}\}$ is implicit through the dependency on $\{\bxx_i\}$)}, as before
\begin{equation*}
\begin{aligned}
\varS'(\tau,\{\lambda_{i,j}\},\{{\alpha_{i,j}}\})=&\,\tau\, (c_w+c_f\tfrac{\mu}{2}) \bxx_0\bxx_0^\top+\frac{1}{2(L-\mu)}\left(\sum_{i=0}^{N} {\lambda_{\star,i}}\bgg_i\bgg_i^\top+\sum_{i=0}^{N-1}{\lambda_{i,i+1}}(\bgg_i-\bgg_{i+1})(\bgg_i-\bgg_{i+1})^\top\right)\\&-\sum_{i=0}^{N} {\lambda_{\star,i}}\tfrac12\left(\bgg_i \bxx_i^\top +\bxx_i\bgg_i^\top\right)
+\sum_{i=0}^{N-1}{\lambda_{i,i+1}}\tfrac12\left(\bgg_{i+1}(\bxx_i-\bxx_{i+1})^\top+(\bxx_i-\bxx_{i+1})\bgg_{i+1}^\top\right)\\
=&\,\tau\, (c_w+c_f\tfrac{\mu}{2}) \bxx_0\bxx_0^\top+\frac{1}{2(L-\mu)}\left(\sum_{i=0}^{N} {\lambda_{\star,i}}\bgg_i\bgg_i^\top+\sum_{i=0}^{N-1}{\lambda_{i,i+1}}(\bgg_i-\bgg_{i+1})(\bgg_i-\bgg_{i+1})^\top\right)\\&-\lambda_{\star,0}\tfrac12(\bgg_0\bxx_0^\top+\bxx_0\bgg_0^\top)-\sum_{i=1}^N(\lambda_{\star,i}+\lambda_{i-1,i})\tfrac12(\bgg_i\bxx_i^\top+\bxx_i\bgg_i^\top)\\
&+\sum_{i=0}^{N-1}\lambda_{i,i+1}\tfrac12(\bgg_{i+1}\bxx_i^\top+\bxx_i\bgg_{i+1}^\top)\\
=&\,\tau\, (c_w+c_f\tfrac{\mu}{2}) \bxx_0\bxx_0^\top+\frac{1}{2(L-\mu)}\left(\sum_{i=0}^{N} {\lambda_{\star,i}}\bgg_i\bgg_i^\top+\sum_{i=0}^{N-1}{\lambda_{i,i+1}}(\bgg_i-\bgg_{i+1})(\bgg_i-\bgg_{i+1})^\top\right)\\&-\lambda_{\star,0}\tfrac12(\bgg_0\bxx_0^\top+\bxx_0\bgg_0^\top)-\sum_{i=1}^{N-1}\lambda_{i,i+1}\tfrac12(\bgg_i\bxx_i^\top+\bxx_i\bgg_i^\top)-\tfrac12(\bgg_N\bxx_N^\top+\bxx_N\bgg_N^\top)\\&+\sum_{i=0}^{N-1}\lambda_{i,i+1}\tfrac12(\bgg_{i+1}\bxx_i^\top+\bxx_i\bgg_{i+1}^\top),
\end{aligned}
\end{equation*}
where we used $\lambda_{\star,i}+\lambda_{i-1,i}=\lambda_{i,i+1}$ (for $i=1,\hdots,N-1$) and $\lambda_{\star,N}+\lambda_{N-1,N}=1$. Now, making the dependence on $\alpha_{i,j}$'s explicit again, we arrive to
\begin{equation*}
\begin{aligned}
\varS'(\tau,\{\lambda_{i,j}\},\{{\alpha_{i,j}}\})=&\,\tau\, (c_w+c_f\tfrac{\mu}{2}) \bxx_0\bxx_0^\top+\frac{1}{2(L-\mu)}\left(\sum_{i=0}^{N} {\lambda_{\star,i}}\bgg_i\bgg_i^\top+\sum_{i=0}^{N-1}{\lambda_{i,i+1}}(\bgg_i-\bgg_{i+1})(\bgg_i-\bgg_{i+1})^\top\right)\\&-\lambda_{\star,0}\tfrac12(\bgg_0\bxx_0^\top+\bxx_0\bgg_0^\top)-\sum_{i=1}^{N-1}\lambda_{i,i+1}\left(1-\tfrac{\mu}{L}\sum_{j=0}^{i-1}\alpha_{i,j}\right)\tfrac12(\bgg_i\bxx_0^\top+\bxx_0\bgg_i^\top)\\&+\sum_{i=1}^{N-1}\lambda_{i,i+1}\sum_{j=0}^{i-1}\alpha_{i,j}\tfrac12(\bgg_i\bgg_j^\top+\bgg_j\bgg_i^\top)-\left(1-\tfrac{\mu}{L}\sum_{j=0}^{N-1}\alpha_{N,j}\right)\tfrac12(\bgg_N\bxx_0^\top+\bxx_0\bgg_N^\top)\\&+\sum_{j=0}^{N-1}\alpha_{N,j}\tfrac12(\bgg_N\bgg_j^\top+\bgg_j\bgg_N^\top)+\sum_{i=0}^{N-1}\lambda_{i,i+1}\left(1-\tfrac{\mu}{L}\sum_{j=0}^{i-1}\alpha_{i,j}\right)\tfrac12(\bgg_{i+1}\bxx_0^\top+\bxx_0\bgg_{i+1}^\top)\\&-\sum_{i=0}^{N-1}\lambda_{i,i+1}\sum_{j=0}^{i-1}\alpha_{i,j}\tfrac12(\bgg_{i+1}\bgg_j^\top+\bgg_j\bgg_{i+1}^\top)
\end{aligned}
\end{equation*}
and it remains to remark that the change of variables
\begin{equation}\label{eq:design_SDP_func}
\begin{aligned}
{\beta_{i,j}}=\left\{\begin{array}{ll}
\lambda_{i,i+1}\alpha_{i,j}   \quad  & \text{if } 0\leq i \leq N-1 \\
 \alpha_{N,j} &\text{if } i=N.
\end{array}\right.
\end{aligned}    
\end{equation}
linearizes the bilinear matrix inequality, again, and it remains to solve the SDP~\eqref{eq:min_upperbound} using standard packages. Numerical results for the pairs $(c_w,c_f)=(1,0)$ and $(c_w,c_f)=(0,1)$ are respectively provided in Section~\ref{s:func1} and Section~\ref{s:func2}. A source code for implementing those SDP is provided in Section~\ref{s:ccl}.

\section{Numerical examples} \label{a:numerics}
As shown in Appendix~\ref{a:func_values}, slight modifications of the relaxations used for obtaining~\eqref{eq:minibound} allows forming tractable problems for optimizing the parameters of fixed-step methods under different optimality criteria.
Although we were unable to obtain closed-form solutions to the problems arising for these alternative criteria, the resulting problems can still be approximated numerically for specific values of $\mu$, $L$ and $N$.

In the following, we provide a couple of examples that were obtained by numerically solving the first-order method design problem~\eqref{eq:minibound}, formulated as a linear semidefinite program using standard solvers~\citep{Yalmip,Article:Mosek}.

\subsection{Optimized methods for $(f(w_N)-f_\star)/{\|w_0-w_\star\|^2}$}\label{s:func1}

As a first example, we consider the criterion $({f(w_N)-f_\star})/{\|w_0-w_\star\|}$. 
% The examples below were obtained for $L=1$ and $\mu=.1$.
The following list provides solutions obtained by solving the corresponding design problem for $N=1,...,5$ with $L=1$ and $\mu=.1$. The solutions are presented using the notations from~\eqref{eq:method} together with the corresponding worst-case guarantees. 
\begin{itemize}
    \item {For a single iteration, {by solving the corresponding optimization problem,} we obtain a method with guarantee $\tfrac{f(w_1)-f_\star}{\|w_0-w_\star\|} \leq 0.1061$ and step size
    \[ [{h_{i,j}^\star}]=\begin{bmatrix} 1.4606  \end{bmatrix}.\]
    This bound and the corresponding step size match the optimal step size $h_{1,0}=\tfrac{\cond+1-\sqrt{\cond^2-\cond+1}}{\cond}$, see~\citep[Theorem 4.14]{taylorconvex}.}
    \item For $N=2$ iterations, we obtain $\tfrac{f(w_2)-f_\star}{\|w_0-w_\star\|} \leq 0.0418$ with
    \[ [{h_{i,j}^\star}]=\begin{bmatrix}
 1.5567 &  \\
 0.1016 & \,\,\,1.7016   \end{bmatrix}.\]
    \item For $N=3$, we obtain $\tfrac{f(w_3)-f_\star}{\|w_0-w_\star\|} \leq 0.0189$ with
    \[ [{h_{i,j}^\star}]=\begin{bmatrix}  
 1.5512 &  &   \\
 0.1220 & \,\,\,1.8708 &   \\
 0.0316 & \,\,\,0.2257 & \,\,\,1.8019  \end{bmatrix}.\]
    \item For $N=4$, we obtain $\tfrac{f(w_4)-f_\star}{\|w_0-w_\star\|} \leq 0.0089$, with
    \[ [{h_{i,j}^\star}]=\begin{bmatrix} 
 1.5487 &  &  &   \\
 0.1178 & \,\,\,1.8535 &  &  \\
 0.0371 & \,\,\,0.2685 & \,\,\,2.0018 &   \\
 0.0110 & \,\,\,0.0794 & \,\,\,0.2963 & \,\,\,1.8497  \end{bmatrix}.\]
    \item Finally, for $N=5$, we obtain $\tfrac{f(w_5)-f_\star}{\|w_0-w_\star\|}\leq 0.0042$ with
    \[ [{h_{i,j}^\star}]=\begin{bmatrix}
 1.5476 &  &  &  &  \\
 0.1159 & \,\,\,1.8454 &  &  &   \\
 0.0350 & \,\,\,0.2551 & \,\,\,1.9748 &  &  \\
 0.0125 & \,\,\,0.0913 & \,\,\,0.3489 & \,\,\,2.0625 &   \\
 0.0039 & \,\,\,0.0287 & \,\,\,0.1095 & \,\,\,0.3334 & \,\,\,1.8732   \end{bmatrix}.\]
\end{itemize}
{Note that when $\mu=0$, we recover the step size policy of the OGM by~\citet{kim2015optimized}. When setting $\mu>0$, we observe that the resulting optimized method is apparently less practical as the step sizes critically depend on the horizon $N$. In particular, one can observe that {$h_{1,0}^\star$} varies with the horizon $N$.} 

Figure~\ref{fig:LB_vs_Opt} illustrates the behavior of the worst-case guarantee for larger values of $N$ and compares it to the currently best known corresponding lower bound, as well as to worst-case guarantees for TMM, Nesterov's Fast Gradient Method (FGM) for strongly convex functions, as well as to the methods generated with the SSEP procedure from~\citep{drori2020efficient}. All the worst-case guarantees are computed numerically using the corresponding performance estimation problems (see e.g., the toolbox~\citep{taylor2017performance}), {and as a result, they are tight in the sense that matching inputs to the algorithms attaining the bounds can be numerically constructed}.
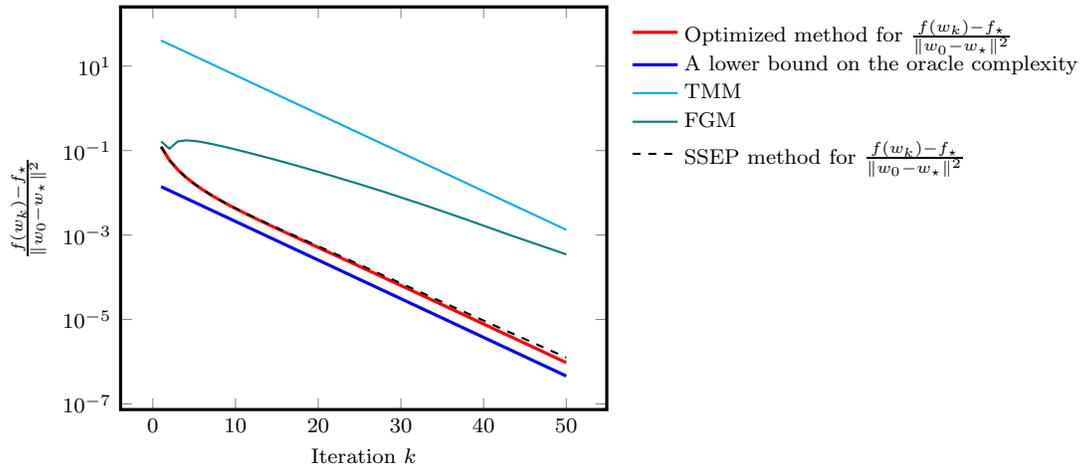
\begin{figure}
\centering
		\begin{tikzpicture}
			\begin{semilogyaxis}[legend pos=outer north east,legend style={draw=none},legend cell align={left},plotOptions,width=.5\linewidth]
			\addplot [red] table [x=k,y=WC]  {Comparison_L1_m01.dat};
			\addplot [blue] table [x=k,y=LB]  {Comparison_L1_m01.dat};
			\addlegendentry{Optimized method for $\tfrac{f(w_k)-f_\star}{\|w_0-w_\star\|^2}$}
			\addlegendentry{A lower bound on the oracle complexity}
			\addplot[cyan,thick] table [y=TMM, x=N, skip coords between index={21}{30}]{Comparisons_L1_m01_GFOM.txt};
			\addlegendentry{TMM}
			\addplot[teal,thick] table [y=FGM, x=N, skip coords between index={21}{30}]{Comparisons_L1_m01_GFOM.txt};
			\addlegendentry{FGM}
			%\addplot[mark=triangle,blue,thick] table [y=GFOMAdrien, x=N, skip coords between index={21}{30}]{Comparisons_L1_m01_GFOM.txt};
			%\addlegendentry{GFOM}
			\addplot[dashed,black,thick] table [y=SSEP, x=N, skip coords between index={21}{30}]{Comparisons_L1_m01_GFOM.txt};
			\addlegendentry{SSEP method for $\tfrac{f(w_k)-f_\star}{\|w_0-w_\star\|^2}$}
			\end{semilogyaxis}
		\end{tikzpicture}
		\caption{Numerical comparison (for $L=1$, $\mu=0.01$) between (i) the worst-case guarantee of the optimized method for $\tfrac{f(w_k)-f_\star}{\|w_0-w_\star\|^2}$ (in red; obtained from developments in Appendix~\ref{a:func_values}, and numerical examples in Appendix~\ref{s:func1}); (ii)~a lower bound on the oracle complexity for this setup (in blue; presented in~\citep[Corollary 3]{drori2021exact}), which corresponds to $\tfrac{f(w_k)-f_\star}{\|w_0-w_\star\|^2}\geq \mu\tfrac{2-\sqrt{\cond}}{1+\sqrt{\cond}}\left(1-\sqrt{\cond}\right)^{2k}$; (iii) the triple momentum method~\citep{van2017fastest} (cyan); (iv) Nesterov's fast gradient method (defined in~\citep[Section 2.2, ``Constant Step Scheme, II'']{nest-book-04}; FGM, green), and (v) the method generated by the subspace-search elimination procedure (SSEP) from~\citep{drori2020efficient} (dashed, black).} \label{fig:LB_vs_Opt}
\end{figure}
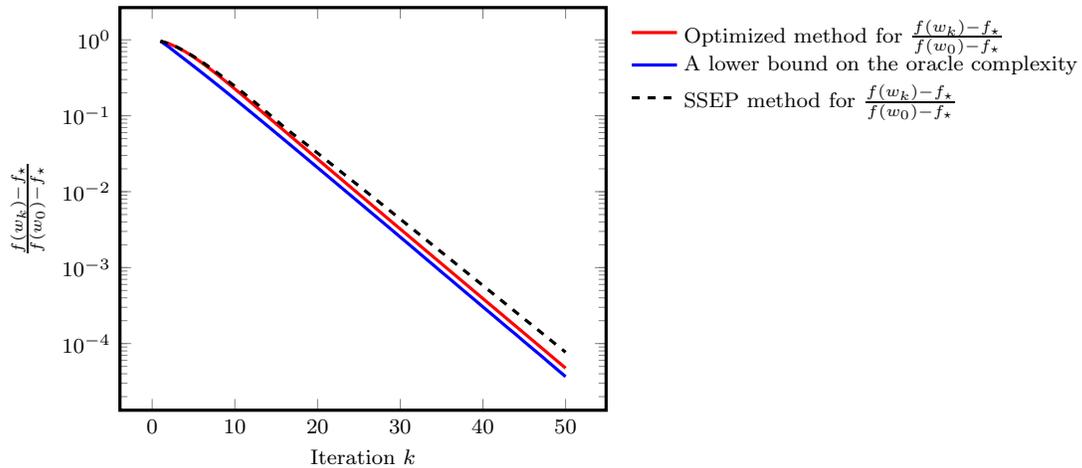
\begin{figure}
\centering
		\begin{tikzpicture}
			\begin{semilogyaxis}[legend pos=outer north east,legend style={draw=none},legend cell align={left},plotOptions2,width=.5\linewidth]
			\addplot [red] table [x=k,y=WC]  {Comparison_L1_m01_funcfunc.dat};
			\addplot [blue] table [x=k,y=LBnew]  {Comparison_L1_m01_funcfunc.dat};
%			\addplot [orange] table [x=k,y=LB_t]  {Comparison_L1_m01_funcfunc.dat};
			\addlegendentry{Optimized method for $\tfrac{f(w_k)-f_\star}{f(w_0)-f_\star}$}
			\addlegendentry{A lower bound on the oracle complexity}
			\addplot [black, dashed] table [x=k,y=WC_SSEP]  {Comparison_L1_m01_GFOM_funcfunc.dat};
			\addlegendentry{SSEP method for $\tfrac{f(w_k)-f_\star}{f(w_0)-f_\star}$}
			
%			\addlegendentry{Lower bound (rate) $\left(1-\sqrt{\cond}\right)^{2k}$}
			\end{semilogyaxis}
		\end{tikzpicture}
		\caption{Numerical comparison (for $L=1$, $\mu=0.01$) between (i) the worst-case guarantee of the optimized method for $\tfrac{f(w_k)-f_\star}{f(w_0)-f_\star}$ (in red; obtained from developments in Appendix~\ref{a:func_values}, and numerical examples in Appendix~\ref{s:func2}); (ii)~a lower bound on the oracle complexity for this setup (in blue; computed numerically using the procedure from~\citep{drori2021exact}); and (iii) a method generated by the subspace-search elimination procedure (SSEP) from~\citep{drori2020efficient} (dashed, black).} 
		\label{fig:LB_vs_Opt_f}
\end{figure}
\clearpage
\subsection{Optimized methods for $({f(w_N)-f_\star})/({f(w_0)-f_\star})$}\label{s:func2}
As in the previous section, the technique can be adapted for the criterion $(f(w_N)-f_\star)/(f(x_0)-f_\star)$, see  Appendix~\ref{a:func_values} for details. The following step sizes were obtained by setting $L=1$ and $\mu=.1$ and solving the resulting optimization problem from different values of $N$.
\begin{itemize}
    \item For a single iteration, $N=1$, we obtain a guarantee $\tfrac{f(w_1)-f_\star}{f(w_0)-f_\star} \leq 0.6694$ with the corresponding step size
    \[ [{h_{i,j}^\star}]=\begin{bmatrix} 1.8182   \end{bmatrix},\]
    which matches the known optimal step size $2/(L+\mu)$ for this setup~\citep[Theorem 4.2]{de2017worst}.
    \item For $N=2$, we obtain $\tfrac{f(w_2)-f_\star}{f(w_0)-f_\star} \leq 0.3554$ with
    \[ [{h_{i,j}^\star}]=\begin{bmatrix} 2.0095 &   \\ 0.4229 & \,\,\,2.0095 \end{bmatrix}.\]
    \item For $N=3$, we obtain $\tfrac{f(w_3)-f_\star}{f(w_0)-f_\star} \leq  0.1698$ with
    \[ [{h_{i,j}^\star}]=\begin{bmatrix} 1.9470 &  &  \\
 0.4599 & \,\,\,2.2406 &   \\
 0.1705 & \,\,\,0.4599 & \,\,\,1.9470 \end{bmatrix}.\]
    \item For $N=4$, we obtain $\tfrac{f(w_4)-f_\star}{f(w_0)-f_\star} \leq 0.0789$ with
    \[ [{h_{i,j}^\star}]=\begin{bmatrix}
 1.9187 &  &  &   \\
 0.4098 & \,\,\,2.1746 &  &  \\
 0.1796 & \,\,\,0.5147 & \,\,\,2.1746 &  \\
 0.0627 & \,\,\,0.1796 & \,\,\,0.4098 & \,\,\,1.9187   \end{bmatrix}.\]
    \item Finally, for $N=5$, we reach $\tfrac{f(w_5)-f_\star}{f(w_0)-f_\star} \leq 0.0365$ with
    \[ [{h_{i,j}^\star}]=\begin{bmatrix} 
 1.9060 &  &  &  &   \\
 0.3879 & \,\,\,2.1439 &  &  &   \\
 0.1585 & \,\,\,0.4673 & \,\,\,2.1227 &  &   \\
 0.0660 & \,\,\,0.1945 & \,\,\,0.4673 & \,\,\,2.1439 &   \\
 0.0224 & \,\,\,0.0660 & \,\,\,0.1585 & \,\,\,0.3879 & \,\,\,1.9060  \end{bmatrix}.\]
\end{itemize}
Note that the resulting method is again apparently less practical than \shortMethodName, as step sizes also critically depend on the horizon $N$; for example, observe again that the value of $h_{1,0}^\star$ depends on $N$. Interestingly, one can observe that the corresponding step sizes are symmetric, and that the worst-case guarantees seem to behave slightly better than in the distance problem ${\|w_N-w_\star\|^2}/{\|w_0-w_\star\|^2}$, although their asymptotic rate has to be the same, due to 
% the lower complexity bounds~\citep{drori2021exact}, and 
the properties of strongly convex functions.   Figure~\ref{fig:LB_vs_Opt_f} illustrates the worst-case guarantees of the corresponding method for larger numbers of iterations, and compares it to the lower bound.

\end{document}